\newtheorem{theorem}{Theorem}[section]
\newtheorem{lemma}[theorem]{Lemma}
\newtheorem{cor}[theorem]{Corollary}
\newtheorem{proposition}[theorem]{Proposition}
\theoremstyle{definition}
\newtheorem{definition}[theorem]{Definition}
\theoremstyle{remark}
\newtheorem{remark}[theorem]{Remark}
\numberwithin{equation}{section}
\newcommand{\cC}{\mathcal{C}}
\newcommand{\cG}{\mathcal{G}}
\newcommand{\cI}{\mathcal{I}}
\newcommand{\cK}{\mathcal{K}}
\newcommand{\MM}{\mathcal{M}}
\newcommand{\OO}{\mathcal{O}}
\newcommand{\cS}{\mathcal{S}}
\newcommand{\cW}{\mathcal{W}}
\newcommand{\hW}{\widehat{\cW}}
\newcommand{\htop}{h_{\scriptsize{\mbox{top}}}}
\newcommand{\vf}{\varphi}
\newcommand{\ve}{\varepsilon}
\begin{document}

\title{Measure of maximal entropy for finite horizon Sinai billiard  flows}

\author{Viviane Baladi\textsuperscript{(1),(2)}, J\'er\^ome Carrand\textsuperscript{(1),(4)}, and Mark Demers\textsuperscript{(3)}}
 
\email{baladi@lpsm.paris}
\email{jerome.carrand@sns.it}
\email{mdemers@fairfield.edu}
\address{(1) Sorbonne Universit\'e and Universit\'e Paris Cit\'e, CNRS,   Laboratoire de Probabilit\'es, Statistique et Mod\'elisation,
	F-75005 Paris, France}
\address{(2) Institute for Theoretical Studies, ETH, 8092 Z\"urich,
	Switzerland}
\address{(3) Department of Mathematics, Fairfield University, Fairfield CT 06824, USA}
\address{(4) Centro di Ricerca Matematica Ennio De Giorgi, SNS, 
56100 Pisa,  Italy (current address)}

\thanks{Part of this work was done during a workshop at ICMS, Edinburgh in June 2022.
	The research of VB  and JC is supported
	by the European Research Council (ERC) under the European Union's Horizon 2020 research and innovation programme (grant agreement No 787304).  MD is partially supported by National Science Foundation grant DMS 2055070.
We thank F. P\`ene for pointing out several typos and the referee for suggestions which helped us to greatly improve
the exposition}.
\begin{abstract}
	Using recent work of Carrand 
	on equilibrium states for the billiard map, and adapting techniques from
	 Baladi and Demers, 
	we construct the unique measure of maximal entropy (MME) for
	 two-dimensional finite horizon Sinai (dispersive) billiard flows $\Phi^1$
(and show it is Bernoulli),  assuming the bound
$\htop(\Phi^1) \tau_{\min} > s_0 \log 2$,
where  $s_0\in (0,1)$ quantifies the recurrence to singularities. This bound holds in
many examples (it is expected to hold generically).
\end{abstract}

\date{\today}

\maketitle
\section{Introduction and Main Result}

\subsection{Background}
Let $\Phi^t$ be a continuous flow on a compact manifold.
The topological entropy of the flow, $\htop(\Phi^1)$,
 is the supremum, over  ergodic  probability measures $\nu$
invariant under the (continuous) time-one map $\Phi^1$ of the Kolmogorov entropy $h_\nu(\Phi ^1)$. If a measure realising the supremum exists, it  is called a measure of 
maximal entropy (MME) for the flow.

\smallskip
For geodesic flows, the study of  the MME  has a rich history.  In the case of  strictly negative curvature, the flow is Anosov, i.e. 
smooth and uniformly hyperbolic, and the pioneering works of
 Bowen \cite{Bow} and  Margulis 
 \cite{Mar0, Mar} half a century ago established existence, uniqueness, and mixing of
the MME, leading to  remarkable consequences, in particular
on the structure (counting and equidistribution) of periodic orbits. 
For more general continuous flows, it became apparent \cite{Bo0, Bo1, BW} that (flow) expansivity implies
existence of the MME, and combined  \cite{Fr} with the
(Bowen) specification 
property,  also gives uniqueness.

Starting with the groundbreaking work of Knieper
\cite{Kn}, most developments in the past 25 years have concerned smooth geodesic flows 
 for which the hyperbolicity or compactness
assumption are relaxed.
In recent years,
Climenhaga and Thompson \cite{CT} have revisited
the Bowen specification approach, which has allowed them
to obtain several striking \cite{CKW, BCFT} results.

\smallskip
Sinai billiard flows, our object of study,  are natural dynamical systems which are uniformly hyperbolic,
but not differentiable
(we  refer to \cite{CM} for a  full-fledged introduction to mathematical billiards):
A Sinai   billiard table $Q$  on the two-torus $\mathbb{T}^2$
is a set
$
Q=\mathbb{T}^2 \setminus \cup_{i} \OO_i
$,
for  finitely many 
pairwise disjoint  convex closed domains $\OO_i$  with $C^3$ boundaries 
having strictly positive curvature $\cK$. 
The
billiard flow  $\Phi^t$, $t \in \mathbb{R}$,   is 
the motion of a point particle traveling in $Q$ at unit speed 
and
undergoing specular reflections\footnote{At a tangential  collision, the
reflection does not change the direction of the particle.} at the boundary of
the scatterers $\OO_i$.  
The associated billiard map $T:M\to M$,  on the compact metric set
$M = \partial Q \times [-\frac{\pi}{2}, \frac{\pi}{2} ]$, is
the first collision map on the boundary of $Q$. Grazing
collisions cause discontinuities in the map $T$, but
 the flow is continuous (after identification of the incoming
and outgoing angles). The map is expansive \cite{BD1},
but this property is not automatically\footnote{See \cite{BW} for a definition
of expansiveness for the flow. See \cite[Ex. 1.6]{Bo0} for
a weaker sufficient condition for existence.} inherited by the flow, 
since neither the map nor the return time is continuous.
In particular, it is not obvious that
 the flow 
 satisfies a  condition (such as  asymptotic $h$-expansiveness \cite{Mi})
 sufficient for the upper-semi continuity of
 the Kolmogorov entropy  (see \cite[App. A--B]{Ca}), and there  does not appear to exist 
 an unconditional proof of the existence --- let alone uniqueness ---
of a MME for  the billiard flow.

\smallskip

The purpose of the present paper is to furnish mild conditions guaranteeing
existence, uniqueness, and mixing (in fact, the Bernoulli property) of the MME for Sinai
billiards.  This can be viewed
as a first step towards the much harder open problem of establishing equidistribution
results for Sinai billiards.

\smallskip
Our results are stated precisely in \S1.2, after furnishing the necessary notations. In particular, 
Corollary~\ref{cormain} of  Theorem~\ref{main} guarantees existence, uniqueness and Bernoullicity  of the MME for all
finite horizon Sinai
billiard flows $\Phi^t$ such that 
\begin{equation}\label{flows}
\htop(\Phi^1) \tau_{\min} > s_0 \log 2\, ,
\end{equation} where $\tau_{\min}$
is the minimum time between collisions, and $s_0\in (0,1)$ quantifies the recurrence rate
to singularities. The sufficient condition for the existence,  uniqueness and Bernoullicity of the MME for billiard maps
obtained in \cite{BD1} is 
\begin{equation}\label{maps}
h_* > s_0 \log2\, ,
\end{equation} where $h_*>0$ is a combinatorial  definition of the topological
entropy of the map (see \eqref{defh}). 
We show below (see the last claim of Lemma~\ref{mild}) that \eqref{flows} implies
\eqref{maps}.
Section~2.4 of \cite{BD1} describes two  billiard classes 
(periodic Lorentz gas with disks of radius $1$ centered in a triangular lattice,
and periodic Lorentz gas with two scatterers of different radii on the unit square
lattice) where  \eqref{maps}  can be
checked  for many parameters. In Remark 5.6 of \cite{Ca}, the author checks 
\eqref{flows}  for an open subset of these parameters.
No example is known where \eqref{maps} or \eqref{flows} can be shown \emph{not} to hold.

\smallskip

Our proof is based on previous work of Carrand \cite{Ca} (Chapter 3 of his thesis \cite{Ca0}, itself relying
on \cite{BD1}) and on \cite{BD2}.
These three papers use the\footnote{To our knowledge, the Climenhaga--Thompson specification approach has not been implemented
yet for Sinai billiards.} technique of transfer operators
acting on anisotropic spaces, which was first introduced to billiards 
by Demers--Zhang \cite{DZ1}, and recently applied to 
construct the measure of maximal entropy of the billiard map 
\cite{BD1}.

\subsection{Results}

To state our main results, Theorem~\ref{main} and\footnote{The condition \eqref{sparse} there is discussed in Lemma~\ref{mild}.} Corollary~\ref{cormain}, we introduce some basic notation.
For $x \in M$, let $\tau(x)$ denote the flow time (return time) from $x$ to $T(x)$,
let $\cK_{\min}=\inf \cK>0$,
and set 
$$\tau_{\min}=\inf \tau>0\, , \,\, \tau_{\max}=\sup \tau\, ,\,\, 
\Lambda = 1 + 2 \tau_{\min} \cK_{\min}\, .
$$
Throughout, we assume finite horizon, that is: there are no trajectories making only tangential collisions. Finite horizon implies $\tau_{\max}<\infty$.

\medskip

Set
$$
P(-t\tau)=\sup_{\mu : T\mbox{\tiny{-invariant ergodic probability measure}}}
\biggl \{h_\mu(T)-t \int \tau d \mu \biggr \} \, , \,\,t\ge 0\, .
$$
The real number $P(-t \tau)$ is called the pressure of the potential $-t\tau$,
and a probability measure $\mu_t$ realising $P(-t \tau)$ is called an equilibrium measure
for $-t\tau$.  For simplicity, we just\footnote{In \cite{BD2} we studied
$P(-t\log J^u T))=\sup_\mu \{h_\mu(T)-t \int \log J^u T \, d \mu  \} $, for $J^uT$ the unstable Jacobian of $T$.
There is no risk of confusion since we only consider $P(-t\tau)$ in the present paper.}
 write $P(t)$ instead of $P(-t\tau)$.

Viewing $\Phi$ as the suspension of $T$ under
$\tau$, 
Abramov's formula says that
any ergodic  probability measure $\nu$ 
invariant under   the time-one map $\Phi^1$  satisfies 
\begin{equation}
\label{Abr0}
\nu=\frac{\mu}{\int  \tau d \mu} \otimes Leb\, ,
\end{equation} 
where
$\mu$ is an ergodic $T$-invariant probability measure, and, in addition,
\begin{equation}
\label{Abr}
h_\nu(\Phi ^1)
=\frac{h_\mu(T)}{\int  \tau d \mu} \, .
\end{equation}
\smallskip

In the  coordinates $x = (r,\vf)$, where $r$ is arclength along
$\partial \OO_i$ and $\vf$ is the  post-collision angle with the normal to 
$\partial \OO_i$,
let $\cS_0 = \{ (r, \vf) \in M: \vf = \pm \frac{\pi}{2} \}$ denote the set of tangential collisions on $M$.
Then  for any $n\in \mathbb Z_*$, the set
$
\cS_n = \cup_{i=0}^{-n} T^i\cS_0$
is the singularity set of $T^n$.
Following \cite{BD1}, define $\MM_0^n$  to be the
set of maximal connected components of
$M \setminus \cS_n$ for $n\ge 1$, and set
\begin{align}\label{defh}
h_*=\lim_{n\to \infty} \frac 1 n \log \# \MM_0^n 
\end{align}
(existence of the limit is easy \cite{BD1}).
Then, for fixed $\vf<\pi/2$ close to $\pi/2$ and large $n \in \mathbb{N}$, define
$s_0(\vf,n) \in (0,1]$ to be the smallest number such that 
any orbit of
length equal to $n$ has at most $s_0n$ collisions whose
angles with the
normal are larger than $\vf$ in absolute value.
 If there exist $\varphi$ and $n$ such that   $s_0 = s_0(\vf,n)$ satisfies
 \begin{equation}\label{sparse0}
 h_*> s_0\log 2 \, ,
 \end{equation}
then \cite{BD1} proves that $P(0)=h_*$, and
there is a unique equilibrium measure $\mu_*=\mu_0$ for $t=0$, which
is  the unique MME of $T$.
As already mentioned, there are many billiards \cite[\S 2.4]{BD1} satisfying \eqref{sparse0}, and in fact
we do not know any billiard which violates it.
Moreover, Demers and Korepanov showed \cite{DK} that 
	a conjecture of B\'alint and T\'oth  \cite{balint toth}, if true,  implies that,
	  for generic finite horizon configurations of scatterers, one can choose $\vf$ and $n$ to make $s_0$ arbitrarily small.

\medskip

Using Abramov's formula, Carrand showed the following:
\begin{proposition}
[{\cite[Lemma 2.5 and its proof, Cor. 2.6]{Ca}}]\label{Car2}
The function $t\mapsto P(t)$ is continuous and strictly
decreasing on $(-\infty, \infty)$, with $-\lim_{t\to \pm \infty}P(t)=\pm\infty$.
The real number $t=\htop(\Phi^1)>0$ is the unique $t$ such that $P(t)=0$.
In addition, the set of equilibrium measures of $T$ for $-\htop(\Phi^1)\tau$
is in bijection with the set of MMEs  of the flow via \eqref{Abr0}.
\end{proposition}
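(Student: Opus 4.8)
The plan is to derive all three assertions from elementary convexity together with Abramov's formula \eqref{Abr0}--\eqref{Abr}; the only genuinely billiard-specific ingredient is that $P(0)=\sup_\mu h_\mu(T)$ is finite and positive.

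\emph{Regularity and asymptotics of $P$.} For each $T$-invariant ergodic probability $\mu$, the map $t\mapsto h_\mu(T)-t\int\tau\,d\mu$ is affine with slope $-\int\tau\,d\mu\in[-\tau_{\max},-\tau_{\min}]$, using finite horizon ($\tau_{\max}<\infty$) and $\tau_{\min}>0$. Hence $P=\sup_\mu(\,\cdot\,)$ is convex, and writing $h_\mu(T)-t\int\tau\,d\mu=(h_\mu(T)-s\int\tau\,d\mu)-(t-s)\int\tau\,d\mu$ and taking suprema gives, for $s<t$, the bound $P(s)-(t-s)\tau_{\max}\le P(t)\le P(s)-(t-s)\tau_{\min}$. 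The right inequality (with $\tau_{\min}>0$) shows $P$ is strictly decreasing; both together show $P$ is $\tau_{\max}$-Lipschitz, hence continuous. Finiteness of $P$ on $\mathbb R$ reduces to $P(0)=\sup_\mu h_\mu(T)<\infty$, which holds for the Sinai billiard map (in fact $P(0)\le h_*$, since $\{\MM_0^n\}$ is generating by expansivity). Taking $s=0$ gives $P(t)\le P(0)-t\tau_{\min}\to-\infty$ as $t\to+\infty$; for $t<0$, any fixed $\mu$ gives $P(t)\ge h_\mu(T)-t\int\tau\,d\mu\ge-t\tau_{\min}\to+\infty$ as $t\to-\infty$; so $-\lim_{t\to\pm\infty}P(t)=\pm\infty$.

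\emph{Identifying the zero.} By \eqref{Abr0} the assignment $\mu\mapsto\nu$ is a bijection from $T$-invariant ergodic probabilities onto $\Phi^1$-invariant ergodic probabilities, and \eqref{Abr} gives $h_\nu(\Phi^1)=h_\mu(T)/\int\tau\,d\mu$; hence $\htop(\Phi^1)=\sup_\mu h_\mu(T)/\int\tau\,d\mu$, which lies in $(0,\infty)$ since $\tau_{\min}\le\int\tau\,d\mu\le\tau_{\max}$ and $0<P(0)<\infty$ (positivity of $P(0)$ follows, e.g., from $h_{\mathrm{SRB}}(T)>0$ by hyperbolicity). Because $\int\tau\,d\mu>0$, for every $t\in\mathbb R$ we have $P(t)\le0$ if and only if $h_\mu(T)\le t\int\tau\,d\mu$ for all $\mu$, equivalently $h_\mu(T)/\int\tau\,d\mu\le t$ for all $\mu$, equivalently $\htop(\Phi^1)\le t$; thus $\{t:P(t)\le0\}=[\htop(\Phi^1),\infty)$. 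Since $P$ is continuous, strictly decreasing and runs from $+\infty$ to $-\infty$, it has a unique zero $t_0$ and $\{t:P(t)\le0\}=[t_0,\infty)$, whence $t_0=\htop(\Phi^1)$; and $t_0>0$ because $P(0)>0$.

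\emph{The bijection.} Put $h=\htop(\Phi^1)$, so $P(h)=0$. For a $T$-invariant ergodic $\mu$ and its image $\nu$ under \eqref{Abr0}: $\mu$ is an equilibrium measure for $-h\tau$ iff $h_\mu(T)-h\int\tau\,d\mu=P(h)=0$, iff $h_\mu(T)/\int\tau\,d\mu=h$, iff (by \eqref{Abr}) $h_\nu(\Phi^1)=h=\htop(\Phi^1)$, i.e. $\nu$ is an MME of $\Phi^1$. As \eqref{Abr0} is a bijection, it restricts to the asserted bijection between equilibrium measures of $-\htop(\Phi^1)\tau$ and MMEs of the flow; the same computation covers non-ergodic measures, \eqref{Abr} and the correspondence being valid there and entropy being affine under ergodic decomposition. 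Finally, the argument is soft apart from one point: confirming $0<P(0)<\infty$, which is where one uses actual properties of the Sinai billiard map (finiteness from $P(0)\le h_*$, positivity from hyperbolicity) rather than abstract thermodynamic formalism. One should also be sure \eqref{Abr0} really is a bijection onto the ergodic measures of the time-one map $\Phi^1$ --- this needs $\tau$ not to be cohomologous to a constant --- but that is already built into the statements \eqref{Abr0}--\eqref{Abr} recalled above.
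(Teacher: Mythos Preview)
The paper does not supply its own proof of this proposition: it is stated as a citation to \cite[Lemma~2.5 and Cor.~2.6]{Ca}, with only the remark ``Using Abramov's formula, Carrand showed the following.'' Your argument is correct and is exactly the expected one---convexity of $P$ as a supremum of affine functions with slopes in $[-\tau_{\max},-\tau_{\min}]$, together with Abramov's formula \eqref{Abr0}--\eqref{Abr} to identify the zero of $P$ with $\htop(\Phi^1)$ and to set up the bijection. This is presumably what Carrand does as well.

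One comment on your closing caveat: the worry that \eqref{Abr0} might fail to be a bijection onto the ergodic measures of the \emph{time-one map} $\Phi^1$ (as opposed to the flow) is legitimate in the abstract, but the paper has already stated \eqref{Abr0} in exactly the form you need, so you are right to defer to it. The condition you mention (``$\tau$ not cohomologous to a constant'') is not quite the relevant one for this issue; what one would actually want is that every $\Phi^1$-invariant ergodic measure is flow-invariant, which for suspensions is tied to the flow having no nontrivial eigenfunctions. In any case, since the entropy of a $\Phi^1$-invariant measure is unchanged under averaging along the flow direction, the supremum defining $\htop(\Phi^1)$ is attained on flow-invariant measures, so your identification of the zero goes through regardless.
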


Denote $\Sigma_n \tau:=\sum_{k=0}^{n-1} \tau\circ T^k$ (to avoid confusion with $\cS_n$ and the  notation $S^\delta_n$ below). We next state Carrand's main results   (see also Proposition~\ref{Car3} below).
 
 \begin{theorem}[{\cite[Theorem 2.1, Theorem 1.2]{Ca}}]\label{Car}
(a)~The following\footnote{By \cite{BD1} we always have $P_*(0)=h_*\ge P(0)$.} limits exist: 
$$
 P_*(t)=\lim_{n\to \infty} \frac 1 n \log Q_n(t)\, ,
 \mbox{ with } Q_n(t)=  \sum_{A \in \MM_0^n} |e^{-t\Sigma_n \tau}|_{C^0(A)}\, ,\,
 \forall t\ge 0\, .
 $$
 Moreover, $P_*(t)> P_*(s)\ge P(s)$ for all $0\le t < s$, and\,\footnote{The fact that
 $P_*(t)$ is strictly decreasing is immediate, see \eqref{forconv}.
 Convexity follows from the H\"older inequality as in \cite[Prop 2.6]{BD2}.}  $t\mapsto P_*(t)$ is convex. 
 
\smallskip
\noindent (b)~If $t\ge 0$ is such that
 \begin{equation}\label{sparse}
 P_*(t)+ t \tau_{\min} > s_0 \log 2 \, ,
 \end{equation} 
 and
 \begin{equation}\label{hassle}
 \log \Lambda > t (\tau_{\max}-\tau_{\min})\, ,
 \end{equation}
 then there is  a unique  equilibrium measure $\mu_t$ for  $-t\tau$.
 This measure charges all open sets, is Bernoulli,
 and $P_*(t)=P(t)$. Finally, $\mu_t$ is
 $T$-adapted,\footnote{\label{foot:adapt}To establish \eqref{adapt}, Carrand shows that
 the $\mu_t$ measure of the $\epsilon$-neighbourhood of $\cS_{\pm 1}$
is bounded by $C_t |\log \epsilon|^{ - \gamma}$ for  some $\gamma >1$ and $C_t<\infty$.} that is
 \begin{equation}\label{adapt}
 \int |\log d(x, \cS_{\pm 1})| \, d\mu_t<\infty\, . 
 \end{equation}
 \end{theorem}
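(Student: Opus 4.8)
The plan is to prove part (a) by soft arguments and part (b) by the transfer‑operator method of \cite{BD1}, now carrying the weight $e^{-t\tau}$. For (a), the existence of $P_*(t)=\lim_n\frac1n\log Q_n(t)$ follows from the same sub‑multiplicativity argument as for $h_*$ in \cite{BD1}, the weight being multiplicative along orbits: on each $A\in\MM_0^{n+m}$ one has $\Sigma_{n+m}\tau=\Sigma_n\tau+(\Sigma_m\tau)\circ T^n$, and $A$ lies in a unique $A'\in\MM_0^n$ with $T^nA$ in a unique $A''\in\MM_0^m$, whence $Q_{n+m}(t)\le D\,Q_n(t)Q_m(t)$ (with $D$ controlled by the finite‑horizon complexity) and Fekete's lemma applies. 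Strict monotonicity is immediate from $\Sigma_n\tau\ge n\tau_{\min}>0$: for $t<s$ one has $e^{-t\Sigma_n\tau}\ge e^{(s-t)n\tau_{\min}}e^{-s\Sigma_n\tau}$ pointwise, so $Q_n(t)\ge e^{(s-t)n\tau_{\min}}Q_n(s)$ and $P_*(t)\ge P_*(s)+(s-t)\tau_{\min}>P_*(s)$; the bound $P_*(s)\ge P(s)$ is the Walters‑type inequality bounding $h_\mu(T)-s\int\tau\,d\mu$ by the exponential growth rate of the weighted sums over the (essentially generating) sequence $\MM_0^n$, as in \cite{BD1}. Convexity follows from the term‑by‑term H\"older inequality $Q_n(\lambda t_1+(1-\lambda)t_2)\le Q_n(t_1)^\lambda Q_n(t_2)^{1-\lambda}$ upon taking $\frac1n\log$ and $n\to\infty$.

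For (b) I would introduce the weighted transfer operator $\mathcal{L}_t$, formally $\mathcal{L}_t f=\sum_{Ty=\,\cdot\,}e^{-t\tau(y)}f(y)$, realised as a bounded operator on the pair of anisotropic Banach spaces $\mathcal{B}\subset\mathcal{B}_w$ of \cite{DZ1,BD1} (norms defined by integrating $f$ against regular test functions along unstable curves, with H\"older/logarithmic weights taming the behaviour near $\cS_{\pm1}$). The heart of the matter is a uniform Lasota–Yorke inequality
\[
\|\mathcal{L}_t^n f\|_{\mathcal{B}}\le C\,e^{nP_*(t)}\bigl(\sigma^n\|f\|_{\mathcal{B}}+\|f\|_{\mathcal{B}_w}\bigr),
\qquad
\|\mathcal{L}_t^n\|_{\mathcal{B}_w}\le C\,e^{nP_*(t)},
\]
with $\sigma<1$, so that by Hennion's theorem the spectral radius of $\mathcal{L}_t$ on $\mathcal{B}$ equals $e^{P_*(t)}$ while its essential spectral radius is strictly smaller. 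The two hypotheses enter precisely here: the complexity bound for the pieces cut by $\cS_n$ contributes a factor $2^{s_0n}$ that must be beaten, after insertion of the weight ($\gtrsim e^{-tn\tau_{\min}}$ at worst), by $e^{nP_*(t)}$ — this is \eqref{sparse}, which reduces to \eqref{sparse0} of \cite{BD1} at $t=0$ — while \eqref{hassle}, i.e.\ $\Lambda>e^{t(\tau_{\max}-\tau_{\min})}$, guarantees that the worst‑case variation of the weight $e^{-t\tau}$ between consecutive collisions is dominated by the minimal hyperbolic expansion $\Lambda$, so the weighted operator retains bounded distortion and remains strictly contracting on $\mathcal{B}$ modulo the weak norm.

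Granting quasi‑compactness, the remaining steps mirror \cite{BD1}: transitivity/mixing of $T$ together with a cone‑positivity argument show that $e^{P_*(t)}$ is a simple, isolated leading eigenvalue with no other peripheral spectrum, yielding a nonnegative leading eigenvector $\nu_t\in\mathcal{B}$ and left eigenfunctional $\ell_t$; the normalised pairing $\mu_t:=\ell_t(\,\cdot\,\nu_t)$ is then checked to be a $T$‑invariant Borel probability that charges all open sets (from strict positivity of $\nu_t$ along unstable curves), and the spectral gap gives exponential, hence multiple, mixing. A Walters‑type upper bound gives $P(t)\le P_*(t)$, while the construction supplies the matching lower bound $h_{\mu_t}(T)-t\int\tau\,d\mu_t\ge P_*(t)$ (via the explicit conditional measures of $\mu_t$ on unstable manifolds, as in \cite{BD1}); since $\int\tau\,d\mu_t\le\tau_{\max}<\infty$ this exhibits $\mu_t$ as an equilibrium measure and yields $P(t)=P_*(t)$. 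For uniqueness, any equilibrium measure $\mu$ satisfies $h_\mu(T)=P(t)+t\int\tau\,d\mu\ge P_*(t)+t\tau_{\min}>s_0\log2$, so the sparse‑recurrence (Katok‑type) argument of \cite{BD1} forces $\mu$ to be $T$‑adapted — in particular \eqref{adapt} holds — and the standard identification of adapted equilibrium states with the transfer‑operator measure gives $\mu=\mu_t$; Bernoullicity of $\mu_t$ then follows from the $K$‑property (multiple mixing) together with the hyperbolic local product structure, via the Chernov–Haskell / Ornstein–Weiss route. Finally the tail estimate $\mu_t(\{d(\cdot,\cS_{\pm1})<\epsilon\})\le C_t|\log\epsilon|^{-\gamma}$ with $\gamma>1$ of footnote~\ref{foot:adapt} is read off by testing $\nu_t\in\mathcal{B}$ against mollified indicators of thin strips around $\cS_{\pm1}$, the exponent being inherited from the logarithmic weight in the norm, and integrating $|\log d(\cdot,\cS_{\pm1})|$ against this tail proves \eqref{adapt}.

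The decisive step — and the main obstacle — is the Lasota–Yorke inequality with the \emph{sharp} leading constant $e^{P_*(t)}$ on the strong space, equivalently pinning the essential spectral radius strictly below $e^{P_*(t)}$; this rests on the billiard growth lemma and careful complexity/distortion bookkeeping for the \emph{weighted} operator, and it is exactly where \eqref{sparse} (to dominate the $2^{s_0n}$ from grazing collisions) and \eqref{hassle} (to dominate the weight's per‑collision variation by the hyperbolicity) are indispensable. The secondary difficulty is the uniqueness step, namely showing that \emph{every} equilibrium measure is adapted, which hinges on the entropy surplus $h_\mu(T)>s_0\log2$ furnished by the term $t\tau_{\min}$ together with the sparse‑recurrence bound.
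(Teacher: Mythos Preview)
Your argument for part~(a) is essentially correct and matches the paper's hints: submultiplicativity for existence, the pointwise bound $e^{-t\Sigma_n\tau}\ge e^{(s-t)n\tau_{\min}}e^{-s\Sigma_n\tau}$ for strict monotonicity (this is exactly \eqref{forconv}), and the termwise H\"older inequality for convexity.

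For part~(b), however, your approach diverges from Carrand's in a way the paper explicitly flags as problematic. You propose a Lasota--Yorke inequality on a pair $\mathcal{B}\subset\mathcal{B}_w$ yielding, via Hennion, a spectral gap for $\mathcal{L}_t$ at $e^{P_*(t)}$, and then deduce exponential mixing, uniqueness, and so on. But the paper states outright (last paragraph of \S1.2) that ``we could not find a Banach norm giving a spectral gap (there is no analogue of \cite[Lemmas~3.3 and 3.4]{BD2} for $\varsigma\ne 0$ \ldots). We thus do not have exponential mixing for $(T,\mu_{\htop(\Phi^1)})$.'' With the logarithmic weights of \cite{BD1} that Carrand actually uses, the growth-lemma bounds do not produce a genuine contraction factor $\sigma<1$ in the strong norm; one only gets subexponential decay. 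So the ``decisive step'' you single out --- pinning the essential spectral radius strictly below $e^{P_*(t)}$ --- is precisely the step that does not go through. Carrand's route bypasses the spectral gap entirely: he verifies the Small Singular Pressure conditions (SSP.1 and SSP.2 of \S\ref{SSP}) directly from \eqref{hassle}, uses them to construct left and right maximal eigenvectors by hand (as limits, not as spectral projections, following \cite{BD1}), pairs these to build $\mu_t$, and then derives $P_*(t)=P(t)$, uniqueness, full support, and Bernoullicity from SSP --- never invoking quasi-compactness.

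A secondary misreading: you interpret \eqref{hassle} as a bounded-distortion condition for the weight. In fact bounded distortion for $e^{-t\tau}$ holds unconditionally, because $\tau$ is piecewise H\"older (this is why homogeneity layers are unnecessary here, unlike in \cite{BD2}). Condition \eqref{hassle} enters instead as what makes the weighted $n$-step expansion estimates strong enough to verify SSP directly (see the discussion around \eqref{deftC}); the entire point of the present paper is that \eqref{hassle} is \emph{not} necessary for SSP once one bootstraps from smaller $t$.
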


The work of Lima and Matheus \cite{LM}  shows the usefulness of the $T$-adapted property.

\smallskip

In view of Proposition~\ref{Car2} and Theorem~\ref{Car}, to establish existence and
uniqueness of the MME of the finite horizon flow $\Phi$, it \emph{suffices}
to check \eqref{sparse} and \eqref{hassle} for $t=\htop(\Phi^1)>0$. We next discuss these conditions. The first one is very mild:

\begin{lemma}\label{mild}
The bound \eqref{sparse} holds  at $t=\htop(\Phi^1)$ as soon as
\begin{equation}
\label{only}
\htop(\Phi^1) \tau_{\min}> s_0 \log 2 \, .
\end{equation}
The bound \eqref{only} holds as soon as 
\begin{equation}
\label{only'}
	h_* \frac{\tau_{\min}}{\tau_{\max}} > s_0 \log 2\, .
\end{equation}
	
\noindent If \eqref{sparse} holds for  some $t'\ge 0$ then it holds for all $t\in [0,t']$. 
In particular, if \eqref{sparse} holds  at $t=\htop(\Phi^1)$ then
\eqref{sparse0} holds since $P_*(0)=h_*$.
	\end{lemma}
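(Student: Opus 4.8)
The plan is to verify each implication in turn, working from the most concrete estimates outward. First I would establish that \eqref{only} implies \eqref{sparse} at $t = \htop(\Phi^1)$. Set $t_0 = \htop(\Phi^1)$, so by Proposition~\ref{Car2} we have $P(t_0) = 0$. By the footnote to Theorem~\ref{Car}(a), $P_*(t_0) \ge P(t_0) = 0$, hence $P_*(t_0) + t_0 \tau_{\min} \ge t_0 \tau_{\min} = \htop(\Phi^1)\tau_{\min}$. If \eqref{only} holds, this last quantity exceeds $s_0 \log 2$, which is exactly \eqref{sparse} at $t = t_0$.

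Next I would show \eqref{only'} implies \eqref{only}. Here the key input is Abramov's formula \eqref{Abr}: for any ergodic $\Phi^1$-invariant $\nu$ with corresponding $T$-invariant $\mu$, we have $h_\nu(\Phi^1) = h_\mu(T)/\int \tau\, d\mu \ge h_\mu(T)/\tau_{\max}$, since $\tau \le \tau_{\max}$ everywhere (finite horizon). Taking the supremum over $\nu$ on the left (equivalently over $\mu$ on the right, using the bijection \eqref{Abr0}) gives $\htop(\Phi^1) \ge (\sup_\mu h_\mu(T))/\tau_{\max} = \htop(T)/\tau_{\max}$. It remains to note that $\htop(T) \ge h_*$; this is the content of \cite{BD1} (indeed under \eqref{sparse0} one has $P(0) = h_*$, so $\htop(T) = h_*$, but the inequality $\htop(T) \ge h_*$ alone suffices and is all we should invoke here). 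Multiplying through by $\tau_{\min}$ and combining with \eqref{only'} yields $\htop(\Phi^1)\tau_{\min} \ge h_* \tau_{\min}/\tau_{\max} > s_0 \log 2$, which is \eqref{only}.

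For the monotonicity claim, I would use that $t \mapsto P_*(t)$ is convex and strictly decreasing (Theorem~\ref{Car}(a)) while $t \mapsto t\tau_{\min}$ is linear, so the function $g(t) := P_*(t) + t\tau_{\min}$ need not itself be monotone; instead one argues directly. Fix $t' \ge 0$ with $g(t') > s_0\log 2$ and take $t \in [0, t']$. Write $t = \lambda t' + (1-\lambda)\cdot 0$ with $\lambda = t/t' \in [0,1]$ (the case $t' = 0$ being trivial). By convexity, $P_*(t) \le \lambda P_*(t') + (1-\lambda)P_*(0)$, which goes the wrong way; so instead I would use that $P_*$ is decreasing to get $P_*(t) \ge P_*(t')$, whence $g(t) = P_*(t) + t\tau_{\min} \ge P_*(t')$, but I still need to dispose of the dropped term $t\tau_{\min} \ge 0$ — that is fine, it is nonnegative, but I have lost $t'\tau_{\min}$. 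The clean route: since $P_*$ is convex with $P_*(t') \le P_*(t)$ for $t \le t'$, the secant slope bound gives $P_*(t') - P_*(t) \ge \frac{t'-t}{t'}(P_*(t') - P_*(0))$ when... this is getting delicate. The honest approach is: $g$ is convex (sum of convex $P_*$ and linear term), $g(0) = P_*(0) = h_* \ge 0 > $ — no, we do not know $g(0) > s_0\log 2$ a priori; that is the conclusion. So convexity of $g$ plus $g(t') > s_0\log 2$ does not immediately give $g(t) > s_0\log 2$ for $t \in [0,t']$ unless $g(0) \ge g(t')$ or similar. The resolution I expect the authors use: show $g$ is actually \emph{decreasing} on $[0,\infty)$, or at least that $g(t) \ge g(t')$ for $t < t'$; this should follow from the variational characterization of $P_*(t)$ — each term $|e^{-t\Sigma_n\tau}|_{C^0(A)} \le e^{-t \cdot n\tau_{\min}}|e^{0}|$ combined more carefully with a term-by-term comparison showing $Q_n(t) e^{t n \tau_{\min}}$ is monotone in a suitable sense, giving $P_*(t) + t\tau_{\min}$ nonincreasing. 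I expect \textbf{this monotonicity of $t \mapsto P_*(t) + t\tau_{\min}$ to be the main obstacle}: it requires revisiting the definition of $Q_n(t)$ and arguing that replacing $\tau$ by $\tau - \tau_{\min} \ge 0$ in the exponent can only decrease things as $t$ grows. Once that is in hand, $g$ nonincreasing on $[0,t']$ with $g(t') > s_0\log 2$ forces $g(t) \ge g(t') > s_0\log 2$ for all $t \in [0,t']$, i.e. \eqref{sparse} for all such $t$.

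Finally, the last sentence is immediate: if \eqref{sparse} holds at $t = \htop(\Phi^1) > 0$ (note $\htop(\Phi^1) > 0$ by Proposition~\ref{Car2}), then by the monotonicity just proved it holds at $t = 0$, reading $P_*(0) + 0 > s_0\log 2$, i.e. $h_* > s_0\log 2$, since $P_*(0) = h_*$ by definition \eqref{defh} and Theorem~\ref{Car}(a); this is precisely \eqref{sparse0}. $\qed$
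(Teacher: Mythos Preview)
Your argument is structurally the same as the paper's: Proposition~\ref{Car2} plus $P_*\ge P$ for the first claim, Abramov for the second, and monotonicity of $t\mapsto P_*(t)+t\tau_{\min}$ for the third. Two remarks are in order.

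\textbf{A genuine gap in the second implication.} You assert that ``$\htop(T)\ge h_*$; this is the content of \cite{BD1}.'' That is false as stated: the unconditional inequality proved there is $h_*=P_*(0)\ge P(0)=\sup_\mu h_\mu(T)$, i.e.\ the \emph{reverse} direction. Equality $P(0)=h_*$ is only known under \eqref{sparse0}. The repair is immediate but you did not make it: observe that \eqref{only'} itself already forces \eqref{sparse0}, since $\tau_{\min}/\tau_{\max}\le 1$ gives $h_*\ge h_*\,\tau_{\min}/\tau_{\max}>s_0\log 2$. Once \eqref{sparse0} is in hand, \cite{BD1} furnishes the MME $\mu_*$ with $h_{\mu_*}(T)=h_*$, and Abramov applied to this single measure yields $\htop(\Phi^1)\ge h_*/\!\int\tau\,d\mu_*\ge h_*/\tau_{\max}$. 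This is exactly what the paper does (it writes the bound directly with $\mu_*$, leaving the justification that $\mu_*$ is available implicit).

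\textbf{The monotonicity step.} After your false starts with convexity and secant slopes, you correctly isolate the point: one needs $g(t)=P_*(t)+t\tau_{\min}$ nonincreasing, and your sketch (``replacing $\tau$ by $\tau-\tau_{\min}\ge 0$'') is right. Concretely, for $0<s<t$ one has $Q_n(t)\le e^{n(s-t)\tau_{\min}}Q_n(s)$, whence $P_*(t)+t\tau_{\min}\le P_*(s)+s\tau_{\min}$; this is exactly \eqref{forconv} in the paper, which packages the conclusion as $P'_{*,-}(t)\le -\tau_{\min}$ in Lemma~\ref{derP} and cites it forward. So your approach and the paper's coincide here; the meandering in your write-up should simply be excised.
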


It is not hard to find  \cite[Remark 5.6]{Ca}  billiards satisfying  \eqref{only}. 
The idea there is to compare a computable lower bound for the left-hand-side of \eqref{only} with an upper bound 
for the right-hand-side. In the examples from \cite[\S 2.4]{BD1}, this comparison is sufficient to check that \eqref{only} holds, 
as long as $\tau_{\min}$ is large enough.

\begin{proof}
The first claim follows from  Proposition~\ref{Car2} and  the bound $P_*(t)\ge P(t)$ for all $t\ge 0$.
The second claim holds because  \eqref{Abr} implies $
	\htop(\Phi^1)\ge \frac{h_*}{\int \tau d\mu_*} \ge \frac{h_*}{\tau_{\max}}$.
Finally,  the first claim of Lemma~\ref{lem:t_*} below implies that
$t\mapsto P_*(t)+t\tau_{\min}$ is nonincreasing.
\end{proof}

Obviously, for any finite horizon billiard, there exists $\tilde t>0$ such that  \eqref{hassle}
	holds for all $t\in[0, \tilde t]$.
However, we do\footnote{Note that \eqref{Abr} implies 
$\htop(\Phi^1) (\tau_{\max}-\tau_{\min}) \le h_* (\tau_{\max}/\tau_{\min}-1)$.} not know any billiard such that \eqref{hassle}  can be verified
for $t=\htop(\Phi^1)$, that is,
$
\log \Lambda > \htop(\Phi^1) (\tau_{\max}-\tau_{\min})$. 
Fortunately, it turns out that 
 \eqref{hassle} is not \emph{necessary:} 
Assuming only finite horizon and \eqref{sparse}  at $t=\htop(\Phi^1)$,
we will extend the conclusion of Theorem~\ref{Car} to $t=\htop(\Phi^1)$ 
by adapting the bootstrapping argument in \cite[Lemma 3.10]{BD2} 
(used there to cross the value $x=1$ at which the pressure for  $-x\log J^u T$ vanishes). This is our main result:

\begin{theorem}\label{main}
Let $T$ be  a finite horizon Sinai billiard map such that  \eqref{sparse} holds  at $\htop(\Phi^1)$.
Then 
for all $t\in[0, \htop(\Phi^1)]$,
we have $P_*(t)=P(t)$, and there exists a unique $T$-invariant probability measure $\mu_t$
realising $P(t)$. This measure charges all nonempty open sets, is Bernoulli and $T$-adapted.
\end{theorem}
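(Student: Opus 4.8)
The plan is to extend Theorem~\ref{Car}(b) from a right-neighbourhood of $0$ up to $t=\htop(\Phi^1)$, adapting the bootstrap of \cite[Lemma~3.10]{BD2}. First, since \eqref{sparse} holds at $\htop(\Phi^1)$, Lemma~\ref{mild} gives \eqref{sparse} for every $t\in[0,\htop(\Phi^1)]$, while \eqref{hassle} certainly holds on some $[0,\tilde t]$ with $\tilde t>0$; so Theorem~\ref{Car}(b) already gives the conclusion of Theorem~\ref{main} on $[0,\min(\tilde t,\htop(\Phi^1))]$. Let $\cG\subseteq[0,\htop(\Phi^1)]$ be the set of $t$ at which $P_*(t)=P(t)$ and the transfer operator $\mathcal{L}_t$ of \cite{Ca} associated with the potential $-t\tau$, acting on the anisotropic Banach space of \cite{BD1}, has a spectral gap with simple leading eigenvalue $e^{P_*(t)}=e^{P(t)}$; at such $t$ the leading eigendata produces, exactly as in \cite{BD1,Ca}, a unique equilibrium measure $\mu_t$ for $-t\tau$ that charges all nonempty open sets and is Bernoulli and $T$-adapted. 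Thus $\cG$ contains a right-neighbourhood of $0$, and since $[0,\htop(\Phi^1)]$ is connected it suffices to prove that $\cG$ is relatively open and relatively closed.

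The crux --- and the place where \eqref{hassle} is dispensed with --- is relative openness. Fix $t_0\in\cG$. The spectral gap at $t_0$ upgrades the Lasota--Yorke (curve-fragmentation) estimate upon iteration --- e.g. on the complement of the leading eigenspace --- so that, for $N$ large, the rate entering the essential-spectral-radius bound for $\mathcal{L}_{t_0}^N$ is an arbitrarily small fraction of $e^{NP(t_0)}$, \emph{whether or not} \eqref{hassle} holds at $t_0$, because the pieces on which $\Sigma_N\tau$ is far from $N\int\tau\,d\mu_{t_0}$ are now suppressed by the spectral picture at $t_0$ rather than by the crude one-step bound of $\Lambda$ against $e^{t(\tau_{\max}-\tau_{\min})}$ used to prove \eqref{hassle} sufficient. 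Since $\tau$ is bounded (finite horizon), the weights $e^{-t\Sigma_N\tau}$, and hence the operators $\mathcal{L}_t^N$, depend continuously on $t$ in the weak norm, while the strong-norm Lasota--Yorke bounds stay uniform for $t$ near $t_0$; the Keller--Liverani perturbation theorem then yields, for $|t-t_0|$ small, a spectral gap for $\mathcal{L}_t$ with simple leading eigenvalue depending continuously on $t$. Identifying that eigenvalue with $e^{P_*(t)}$ and then (as in \cite{BD1,Ca}) with $e^{P(t)}$, and rebuilding $\mu_t$ from the leading left and right eigenvectors exactly as there --- the $|\log\ve|^{-\gamma}$ control of footnote~\ref{foot:adapt} persisting with a constant locally bounded in $t$, which also delivers $T$-adaptedness --- shows that an entire neighbourhood of $t_0$ lies in $\cG$.

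For relative closedness, suppose the conclusion holds on $[0,t_*)$. The equality $P_*(t_*)=P(t_*)$ passes to the limit by continuity ($P$ is continuous by Proposition~\ref{Car2}, $P_*$ is convex), and any weak-$*$ limit point $\mu_{t_*}$ of $(\mu_t)_{t\uparrow t_*}$ is $T$-adapted because the $\mu_t$-mass of $\ve$-neighbourhoods of $\cS_{\pm1}$ is locally uniform in $t$; hence $\mu_{t_*}$ charges no grazing collision, $\int\tau\,d\mu_{t_*}=\lim_{t\uparrow t_*}\int\tau\,d\mu_t$, and the attendant upper semicontinuity of the entropy along the sequence makes $\mu_{t_*}$ an equilibrium measure for $-t_*\tau$; the spectral-gap description at $t_*$, hence $t_*\in\cG$, then follows by applying the openness argument at a base point just below $t_*$ --- here one uses that, via convexity of $P_*$ and the derivative bounds of Lemma~\ref{lem:t_*}, the constants in that argument, and so the perturbation radius, can be kept bounded below on $[0,\htop(\Phi^1)]$. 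Combining relative openness and relative closedness with connectedness gives $\cG=[0,\htop(\Phi^1)]$, which is Theorem~\ref{main}.

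I expect the real work to be the openness step: making precise, via the iterate $\mathcal{L}_{t_0}^N$, how the spectral gap at $t_0$ quantitatively substitutes for \eqref{hassle} in the essential-spectral-radius estimate --- this is the substance of \cite[Lemma~3.10]{BD2}, to be transcribed from the $-x\log J^uT$ setting there to the $-t\tau$ setting here --- together with the check that every structural property of $\mu_t$ proved in \cite{BD1,Ca} under \eqref{hassle} was in fact deduced only from the spectral gap and from complexity and distortion bounds that survive the perturbation. Once that is in place, the closedness and limiting arguments should be routine.
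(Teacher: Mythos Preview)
Your proposal rests on a premise that is not available here: there is \emph{no spectral gap} for $\mathcal{L}_t$ on the Banach space of \cite{BD1} used in \cite{Ca}. The paper says so explicitly (see the paragraph just before the outline in \S1): the logarithmic modulus of continuity in the norm of \cite{BD1} does not yield the compact-part estimate needed for a Lasota--Yorke inequality with contracting essential spectral radius (there is no analogue of \cite[Lemmas~3.3--3.4]{BD2} for $\varsigma\neq0$), so exponential mixing is not obtained even at $t=0$. Consequently your definition of $\cG$ via a spectral gap is empty as far as we know, the Keller--Liverani perturbation step cannot be launched, and the ``openness'' mechanism you describe --- upgrading the Lasota--Yorke bound via iteration on the complement of the leading eigenspace --- has no starting point. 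Your closedness step also leans on upper semicontinuity of entropy along $(\mu_t)$, which is precisely one of the delicate issues for billiards flagged in the introduction and not available off the shelf.

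What the paper actually does is quite different and avoids spectral theory entirely. It works directly at the level of the Small Singular Pressure conditions \eqref{keyb}, \eqref{keyb3}, \eqref{keyb2}: define $t_\infty$ as the supremum of parameters for which SSP holds (so $t_\infty\ge t_C>0$ by \cite{Ca}), and argue by contradiction that $t_\infty\ge\htop(\Phi^1)$. The replacement for your perturbation argument is a H\"older interpolation (Lemma~\ref{lem:low t>}) bounding $\cG_n^\delta(W,t)$ below in terms of data at a nearby $u<t_\infty$, combined with a ``pivot'' $t_*<t_\infty$ chosen so that the resulting exponent beats the short-curve growth $\theta_0^{tn}$ (Lemma~\ref{lem:t_*}). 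This yields SSP on an interval $[t_*,s_*)$ with $s_*>t_\infty$ whenever $P_*(t_*)\ge0$ (Lemma~\ref{lem:short small t>}), contradicting the definition of $t_\infty$ unless $P(t_\infty)<0$, i.e.\ $t_\infty>\htop(\Phi^1)$. Proposition~\ref{Car3} then gives all the claimed properties of $\mu_t$ from SSP alone, with no spectral gap needed. The moral is that the bootstrap of \cite[Lemma~3.10]{BD2} is transcribed here not as a spectral perturbation but as a direct estimate on weighted curve counts.
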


Our proof furnishes 
$t_\infty> \htop(\Phi^1) $ such that the key Small Singular Pressure 
properties  \eqref{keyb}, \eqref{keyb3}, and \eqref{keyb2} hold for all $t\in[0, t_\infty]$. 
Note that if \eqref{sparse} holds
at some $t_2\in (\htop(\Phi^1), t_\infty]$,  the conclusion of Theorem~\ref{main} holds for
all $t\in [0,t_2]$.

Theorem~\ref{Car} and Proposition~\ref{Car2} of Carrand,   combined with  Theorem~\ref{main}
and the proof of \cite[Props. 7.1 and 7.2]{Ca} for Bernoullicity of the flow, give:

\begin{cor}\label{cormain}
Let $T$ be  a finite horizon Sinai billiard map such that  \eqref{sparse} holds  at $t=\htop(\Phi^1)$.
Then 
$$
\nu_*:=\frac{\mu_{\htop(\Phi^1)}}{\int \tau \, d \mu_{\htop(\Phi^1)}} \otimes Leb\, 
$$
is the unique measure of maximal entropy of the billiard flow.
This measure is Bernoulli, it charges all nonempty open sets, and  it  is flow adapted, that
is\footnote{Note that \eqref{flad} implies that $\log \| D\Phi_t \|$ is integrable for each $t \in [-\tau_{\min}, \tau_{\min}]$ so that, by subadditivity, it is integrable
for each $t \in \mathbb{R}$.}
\begin{equation}\label{flad}
\int_\Omega |\log d_\Omega(x, \cS_0^\pm) | \, d\nu_{*} <\infty\, ,\quad
\Omega =  Q \times \mathbb{S}^1  
\, ,
\end{equation}
where  
$d_\Omega$ is the Euclidean metric, 
$
\cS_0^- = \{ \Phi_{-s}(z) 
: z \in \cS_0 \, ,\,  s \le \tau(T^{-1}z) \}$,
and $ \cS_0^+ = \{ \Phi_s(z) 
: z \in \cS_0 \, ,\,   s \le \tau(z) \}$.
\end{cor}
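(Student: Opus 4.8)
The plan is to assemble four ingredients: Theorem~\ref{main} at $t=\htop(\Phi^1)$, Proposition~\ref{Car2}, Abramov's formula \eqref{Abr0}--\eqref{Abr}, and Carrand's passage from the Bernoulli property of the base map to that of the flow in \cite[Props.\ 7.1 and 7.2]{Ca}. By Lemma~\ref{mild} the standing assumption (\eqref{sparse} at $t=\htop(\Phi^1)$) is exactly the hypothesis of Theorem~\ref{main}, which therefore produces a unique $T$-invariant probability $\mu_*:=\mu_{\htop(\Phi^1)}$ realising $P(\htop(\Phi^1))$, Bernoulli, $T$-adapted, and charging every nonempty open subset of $M$. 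Since $P(\htop(\Phi^1))=0$ by Proposition~\ref{Car2}, $\mu_*$ is the unique equilibrium measure of $T$ for $-\htop(\Phi^1)\tau$; and since that proposition puts the set of such equilibrium measures in bijection with the set of MMEs of $\Phi$ via \eqref{Abr0}, the measure $\nu_*=\frac{\mu_*}{\int\tau\,d\mu_*}\otimes Leb$ --- with $\int\tau\,d\mu_*\in[\tau_{\min},\tau_{\max}]$ finite and positive by finite horizon --- is the unique MME of the flow. That $\nu_*$ charges every nonempty open set is then immediate from realising $\Phi$ as the suspension of $T$ under $\tau$: writing $\pi$ for the natural surjection from $\{(x,s):x\in M,\ 0\le s<\tau(x)\}$ onto $\Omega$, an open $U\neq\emptyset$ has $\pi^{-1}U$ open and nonempty; picking an interior point $(x_0,s_0)\in\pi^{-1}U$ with $x_0\notin\cS_0$ (so $\tau$ is continuous near $x_0$) and $0<s_0<\tau(x_0)$, we obtain a box $V\times(a,b)\subseteq\pi^{-1}U$ with $x_0\in V\subseteq M$ open and $0<a<b<\inf_V\tau$, whence $\nu_*(U)\ge\frac{\mu_*(V)(b-a)}{\int\tau\,d\mu_*}>0$ because $\mu_*$ charges $V$.

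For the Bernoulli property of $\nu_*$ one cannot argue formally from that of $\mu_*$ (a constant roof already destroys mixing), but this is precisely what the argument behind \cite[Props.\ 7.1 and 7.2]{Ca} delivers: it upgrades the mixing and $T$-adaptedness of $\mu_*$, together with the uniform hyperbolicity and non-arithmeticity of the billiard flow, first to the $K$-property and then to the Bernoulli property of $\nu_*$ --- and all these inputs are available here, the hyperbolicity and spectral information being by-products of the transfer-operator picture built in the proof of Theorem~\ref{main}. I expect the main (though modest) difficulty of the corollary to lie exactly in this step: one must check that \cite[Props.\ 7.1, 7.2]{Ca} use only the map-level conclusions furnished by Theorem~\ref{main}, and not the auxiliary inequality \eqref{hassle} present in Carrand's original Theorem~\ref{Car}, after which his proof goes through verbatim with $t=\htop(\Phi^1)$.

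It remains to establish flow-adaptedness \eqref{flad}. Using Abramov's formula in the form $\int_\Omega g\,d\nu_*=(\int\tau\,d\mu_*)^{-1}\int_M\int_0^{\tau(x)}g(\Phi^s x)\,ds\,d\mu_*(x)$ with $g=|\log d_\Omega(\cdot,\cS_0^\pm)|$, I would reduce \eqref{flad} to the one-segment estimate
\[
\int_0^{\tau(x)}\big|\log d_\Omega(\Phi^s x,\cS_0^\pm)\big|\,ds\ \le\ C\big(1+|\log d(x,\cS_0)|+|\log d(Tx,\cS_0)|\big)\qquad\text{for }\mu_*\text{-a.e.\ }x,
\]
with $C$ independent of $x$: away from $s=0$ and $s=\tau(x)$, the sets $\cS_0^+$ (the forward $\Phi$-orbit of $\cS_0$ up to the next collision) and $\cS_0^-$ (its backward orbit up to the previous one) appear, near the trajectory segment from $x$ to $Tx$, as uniformly transverse smooth surfaces lying at $\Omega$-distance $\gtrsim d(x,\cS_0)$, resp.\ $\gtrsim d(Tx,\cS_0)$, from $\Phi^s x$, while as $s\to0$ or $s\to\tau(x)$ only a logarithmically integrable singularity arises, again governed by those two quantities (using $\cS_0\subseteq\cS_0^+\cap\cS_0^-$). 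Integrating over $x$, invoking $T$-invariance of $\mu_*$ and $d(x,\cS_0)\ge d(x,\cS_{\pm1})$, the right-hand side is $\mu_*$-integrable by \eqref{adapt}; this yields \eqref{flad}, and the footnote's remark about $\log\|D\Phi^t\|$ follows from the standard bound $\log^+\|D\Phi^t\|\lesssim 1+|\log d_\Omega(\cdot,\cS_0^\pm)|$ for $|t|\le\tau_{\min}$ together with subadditivity. This one-segment estimate is the only remaining place where a genuine --- but routine --- computation is needed.
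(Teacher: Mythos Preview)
Your proposal is correct and follows essentially the same route as the paper, which does not give a separate proof but simply states (in the sentence introducing the corollary) that the result follows by combining Theorem~\ref{main}, Proposition~\ref{Car2}, Theorem~\ref{Car}, and the proof of \cite[Props.~7.1 and 7.2]{Ca} for Bernoullicity of the flow. You have filled in exactly this combination, and your additional details---the open-sets argument via a suspension box, the remark that \cite[Props.~7.1, 7.2]{Ca} must be checked not to rely on \eqref{hassle}, and the reduction of flow-adaptedness to a one-segment estimate integrated against $\mu_*$ using \eqref{adapt}---are all appropriate expansions of what the paper leaves implicit.
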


\smallskip

Contrary to \cite{BD2}, homogeneity layers are not used for our potentials $-t\tau$. They are not needed
because $\tau$ is piecewise H\"older and thus  $e^\tau$  satisfies piecewise bounded
distortion. The results of Carrand \cite{Ca} that we build upon
are based on bounds for transfer operators acting on Banach spaces of distributions
defined with the logarithmic modulus of
continuity of \cite{BD1}. We could not find a Banach norm  giving a spectral gap (there is no analogue
of \cite[Lemmas 3.3 and 3.4]{BD2} for $\varsigma\ne 0$,
see  \cite[Lemma 3.1]{Ca} for $\gamma\ne 0$ where $(\log |W|/\log|W_i|)^\gamma$ replaces $(|W_i|/|W|)^\varsigma$). We thus do not have exponential mixing
for $(T,\mu_{\htop(\Phi^1)})$. (Even if we had, it would not immediately imply exponential 
mixing for $(\Phi^1,\nu_*)$.)

\medskip

The paper is organised as follows:
Section~\ref{GL} is devoted to recalling notation from \cite{BD1} and to two basic lemmas on 
cone stable curves iterated by the billiard map. 

Section~\ref{boot}
contains key ingredients from \cite{Ca} as well as the crucial
new definition \eqref{bart}, as we explain next: To show Theorem~\ref{Car},
Carrand introduced a key technical condition of Small Singular Pressure (SSP).
The pressure $P_*(t)$ is a thermodynamic limit corresponding to sums (for the weight $\exp(-t\tau)$
arising from Abramov's formula)
over stable curves iterated (in the
past), and cut by billiard singularities. As usual for hyperbolic systems with singularities, 
for fixed $t>0$, we must
see that the contraction coming from the weight $\exp(-t\tau)\le \exp (-t\tau_{\min})$ beats the growth 
due to summing over bits fragmented by the singularities.
(This is necessary to get good bounds on the iterated transfer operators associated
to the map $T$ and the weight $\exp(-t\tau)$. These bounds are needed to construct  maximal eigenvectors
for this operator and its dual on suitable Banach spaces of distributions.)
Condition SSP for a parameter $t>0$ essentially says that there exists a scale $\delta_t>0$
such that, at all large times, the contribution of those thermodynamic sums 
 which correspond
to curves which have become  shorter than $\delta_t/3$ is at most a  controlled fraction of the sum over all
curves.
In \S\ref{SSP}, we first recall the SSP conditions \eqref{keyb},  \eqref{keyb3}, and \eqref{keyb2} from \cite{Ca},
and we then state Carrand's  conditional Theorem~\ref{Car3}. This theorem says that, if SSP holds at $t$, then there is a unique equilibrium measure for the potential $-t\tau$, and it thus reduces Theorem~\ref{main} to showing
SSP for some $t\ge \htop(\Phi^1)$.
We  set up the bootstrap mechanism by introducing in \eqref{bart} the supremum $t_\infty>0$ 
of parameters satisfying SSP  (this is the new idea). 
The first key lemma, Lemma~\ref{lem:low t>},  inspired by 
\cite[Lemma 3.10]{BD2} 
exploits the H\"older inequality to estimate weighted thermodynamic sums for $t$
by using the pressure $P_*(u)$ and its one-sided derivative $P'_{*,-}(u)$, for $0<u\le t<t_\infty$.
It is stated and proved in \S\ref{KL}.

The actual bootstrapping argument
is carried out in \S\ref{44}. 
Lemma~\ref{lem:t_*}   embodies our version of ``pressure gap''  (inspired by  \cite[Definition 3.9]{BD2}): This lemma constructs a ``{pivot}'' $t_*<t_\infty$ 
and its associated parameter  $s_*(t_*)>t_\infty$. (The pivot is chosen in such a way that
the first key lemma  can be exploited at $u=t_*$.)
Lemma~\ref{lem:short small t>},
the second key lemma (inspired by
\cite[Lemma 3.11]{BD2}),
says that, if $P_*(t_*)\ge 0$,
then SSP holds in the interval $[t_*,s_*(t_*))$.
(The proof uses the first key lemma, taking advantage of the choice of the pivot.)
Finally,  Theorem~\ref{main} is proved 
in \S\ref{proofmain}: We assume for a contradiction that $t_\infty<\htop(\Phi^1)$.
Since $t_*<t_\infty$, this implies, by results from
\cite{Ca} recalled in Proposition~\ref{Car2} and Theorem~\ref{Car}(a), that the pressure   $P_*(t_*)$ is 
nonnegative.
The second key lemma can thus be applied and  gives the desired contradiction
since $s_*(t_*)>t_\infty$.


\section{Notations. $n$-Step Expansion. Growth Lemma}\label{GL}

We recall here some facts about hyperbolicity and complexity of finite horizon Sinai billiards.
There exist continuous families of stable and unstable cones, $\cC^s$ and $\cC^u$, which can be taken
constant in $M$, and a constant $C_1 \in (0,1)$ such that,
\begin{equation}
\label{eq:hyp}
\| DT^n(x) v \| \ge C_1 \Lambda^n \| v \| \, ,\; \forall v \in \cC^u \, , \quad
\| DT^{-n}(x) v \| \ge C_1 \Lambda^n \| v \| \, , \; \forall v \in \cC^s\, ,
\end{equation}
where, as before, $\Lambda = 1 + 2 \tau_{\min} \cK_{\min}$ is the minimum hyperbolicity constant.

A fundamental fact about this class of billiards is the linear bound on the growth in complexity due to
Bunimovich \cite[Lemma~5.2]{Ch},
\begin{equation}
\label{eq:complex}
\begin{split}
& \mbox{There exists  $K \ge 1$ such that for all $n \ge 0$, the number of curves in $\cS_{\pm n}$}  \\
& \mbox{that intersect
at a single point is at most $Kn$.}
\end{split}
\end{equation}

 The parameter $\gamma >1$ defining the Banach space norms in \cite{Ca} is chosen so that 
$h_* > s_0 \gamma \log 2$, which is possible due to \eqref{sparse0}.
Next, choosing $m$ so large that,
\[
\tfrac 1m \log (Km+1) < h_* - s_0 \gamma \log 2 \, ,
\]
we take $\delta_0 = \delta_0(m) \in (0,1/C_1)$
so that any stable curve of length at most $\delta_0$ can be cut by
$\cS_{-\ell}$ into at most $K\ell+1$ connected components for all $0 \le \ell \le 2m$.

 Let $\hW^s$
 be, as in \cite[\S 5]{BD1}, the set of (cone-stable) curves whose tangent
vectors  lie in the stable cone for $T$, with length at most $\delta_0$ and curvature
bounded above by a constant $C_{\cK}$ depending only on the table  (homogeneity layers are not used).
The constant $C_{\cK}$ is chosen large enough  that  $T^{-1} \hW^s \subset \hW^s$, up to subdivision of curves.
For $n\ge 1$, $\delta \in (0,\delta_0]$, and $W\in \hW^s$, let  $\cG_{n}^{ \delta}(W)$,
$L_{n}^{ \delta}(W)$, $S_{n}^{ \delta}(W)$, 
and $\cI_n^\delta(W)$ be as in \cite[\S 5]{BD1}:  Set $\cG^\delta_0(W) = W$ and 
define $\cG_n^\delta(W)$ for $n\ge 1$ to be the set of 
smooth components of $T^{-1}W'$ for $W' \in \cG^\delta_{n-1}(W)$, with  elements longer than $\delta$ subdivided to have length between
$\delta/2$ and $\delta$.  More precisely, if a smooth component $U$
has length  $\ell \delta + \rho$ with
$\ell \ge 1$ and $0 \le \rho<\delta$, we decompose $U$ into:\begin{itemize}
\item
either $\ell\ge 2$ pieces
of length $\delta$, if $\rho=0$, 
\item or $\ell\ge 1$ piece(s)
of length $\delta$ and one piece of length $\rho$, placed at one of the
edges of $U$, if $\rho\ge\delta/2$, 
\item
or $\ell-1\ge 0$ piece(s) of length $\delta$, one piece of length
$\delta/2$ (at one tip) and one piece of length $\rho+\delta/2$ (at the other
tip), if $\rho\in (0, \delta/2)$.
\end{itemize} 

 Let $L_n^{\delta}(W)$ denote the set of curves in 
$\cG_n^{\delta}(W)$ that have
length at least $\delta/3$ and
 let $S_n^\delta(W) = \cG_n^\delta(W) \setminus L_n^\delta(W)$. 
For $0\le k<n$, we say that $U \in \cG_k^\delta(W)$ is an ancestor of
$V \in \cG_n^\delta(W)$ if $T^{n-k}V \subseteq U$, and we define $\cI_n^\delta(W)$ to be those curves in 
$\cG_n^\delta(W)$ that have no 
ancestors
of length at least $\delta/3$ (aside from perhaps $W$ itself).  

Finally, let $\delta_1< \delta_0$ and $n_1 \ge m$ be chosen so that \cite[eq. (5.6)]{BD1} holds:
For any stable curve $W$ with $|W| \ge \delta_1/3$ and $n \ge n_1$,
\begin{equation}\label{defn1}
\# L_n^{\delta_1}(W) \ge \tfrac 23 \# \cG_n^{\delta_1}(W) \, .
\end{equation}

Up to replacing $\delta_1$ by a smaller constant, we may and shall only consider
values of $\delta$  of the form 
\begin{equation}\label{convent}
\delta=\delta_0/2^N\, , \quad N\ge 0\, .
\end{equation}
The convention \eqref{convent} is used (only) to allow us to 
ensure that\footnote{
An alternative way to guarantee \eqref{careful} for a fixed length scale $\delta'$ is to
define $\cG_n^{\delta'}(W)$ as usual and treat it as the canonical partition of $T^{-n}W$.  Then for any $\delta'' < \delta'/2$
one can define $\cG_n^{\delta''}(W)$ as a refinement of $\cG_n^{\delta'}(W)$, guaranteeing \eqref{careful}.
This is done implicitly in the proof
of \cite[Lemma~3.11]{BD2} and could be applied in our Lemma~\ref{lem:short small t>} below by taking $\delta' = \delta_{t_*}$
of that lemma.  We do not adopt this approach since  the canonical scale would not be chosen until nearly the end
of our proof.}
for  all $W\in \hW^s$,
\begin{equation}\label{careful}
\forall n\ge 1\, , \mbox{ if } \delta''
<\delta'
\mbox{ then } \forall \, U'' \in L^{\delta''}_n(W) \, , \,  \exists! \, U'\in \cG^{\delta'}_n(W) \mbox{ with } U''\subset U'\, .
\end{equation}
(To prove \eqref{careful} using \eqref{convent}, use induction on $N$, selecting the short tips in a compatible way when dividing $\delta$ by two.)
Property \eqref{careful} is used only
in the proof of Lemma~\ref{lem:short small t>} below.

For $t\ge 0$, we introduce the following shorthand notation,  
$$
S_{n}^{ \delta}(W,t):=
\sum_{W_i \in S_{n}^{ \delta}(W)} |e^{-t \Sigma_{n} \tau}|_{C^0(W_i)}
\, ,\,\,
\cG_{n}^{ \delta}(W,t):=
\sum_{W_i \in \cG_{n}^{ \delta}(W)} |e^{-t\Sigma_{n} \tau }|_{C^0(W_i)}\, ,
$$
and
$$
L_{n}^{ \delta}(W,t):=\cG_{n}^{ \delta}(W,t)-S_{n}^{ \delta}(W,t)\, , \, \, \cI^{\delta}_n(W,t)
:=\sum_{W_i \in \cI_{n}^{ \delta}(W)} |e^{-t\Sigma_{n} \tau }|_{C^0(W_i)}\, .
$$

The lemma below replaces   the
usual one-step expansion (see \cite[Lemma 3.1]{BD2}):

\begin{lemma}[$n$-Step Expansion]
\label{lem:N step}
For any $t_0>0$ and  $\theta_0 \in (e^{-  \tau_{\min}},e^{-  \tau_{\min}/2})$
there exist a finite $n_0(t_0, \theta_0) \ge  2$ and $ \bar \delta_0=\frac{\delta_0}{2^N}>0$ such that 
\begin{equation}
	\label{eq:theta}
	 S_{n_0}^{\bar\delta_0}(W,t) \le  \cG_{n_0}^{\delta_0}(W,t)  < \theta_0^{n_0 t}\, ,\; \;  \forall W \in \hW^s
	\mbox{ with } |W|\le \bar \delta_0  \, ,\, \,
	\forall t\ge  t_0 \, .
\end{equation}
\end{lemma}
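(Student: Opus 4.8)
\textbf{Proof plan for Lemma~\ref{lem:N step} ($n$-Step Expansion).}

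The plan is to bootstrap from the growth properties of $\cG_n^{\delta_1}$ at the fixed scale $\delta_1$ (where \eqref{defn1} is available) together with the exponential weight $e^{-t\Sigma_n\tau}\le e^{-t n\tau_{\min}}$ coming from Abramov's formula. The point of the statement is that, once $n_0$ is large, the per-step expansion factor governing $\cG_{n_0}^{\delta_0}(W,t)$ is at most $e^{-t\tau_{\min}}$ times a subexponentially-growing combinatorial factor; since $\theta_0 > e^{-\tau_{\min}}$ strictly, for fixed $t\ge t_0>0$ we can absorb the combinatorial growth and beat $\theta_0^{n_0 t}$.

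First I would fix $t_0$ and $\theta_0\in(e^{-\tau_{\min}},e^{-\tau_{\min}/2})$, and record the elementary bound
\[
\cG_n^{\delta_0}(W,t)\le \#\cG_n^{\delta_0}(W)\cdot e^{-t n\tau_{\min}}\, .
\]
So it suffices to control the \emph{cardinality} growth $\#\cG_n^{\delta_0}(W)$ for $W$ short. Using the choice of $\delta_0=\delta_0(m)$ (any stable curve of length $\le\delta_0$ is cut by $\cS_{-\ell}$ into at most $K\ell+1$ pieces for $0\le\ell\le 2m$) and the convention \eqref{convent} with \eqref{careful} to pass between scales $\bar\delta_0=\delta_0/2^N$ and $\delta_0$, I would show that over a block of $m$ iterates a short curve produces at most $C(K m+1)$ smooth components, each again of length $\le\delta_0$ (subdivision only helps the counting), so that $\#\cG_n^{\delta_0}(W)\le C^{n/m}(Km+1)^{n/m}$ for $n$ a multiple of $m$, with an obvious adjustment for the remainder. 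Hence
\[
\cG_n^{\delta_0}(W,t)\le \bigl(C^{1/m}(Km+1)^{1/m}e^{-t\tau_{\min}}\bigr)^{n}\quad(\text{up to a bounded factor}).
\]
Since $t\ge t_0$, the base is at most $C^{1/m}(Km+1)^{1/m}e^{-t_0\tau_{\min}}$; but we are free to \emph{also} enlarge $m$ (equivalently $N$, hence shrink $\bar\delta_0$) so that $C^{1/m}(Km+1)^{1/m}<e^{(\tau_{\min}-\log(1/\theta_0))t_0/2}$ or any fixed constant $<e^{(\tau_{\min}+\log\theta_0)t_0}$ we please — this uses $\theta_0>e^{-\tau_{\min}}$ crucially, since $\tau_{\min}+\log\theta_0>0$. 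Wait — the combinatorial term does not depend on $t$, so I must be slightly more careful: I want $C^{1/m}(Km+1)^{1/m}e^{-t\tau_{\min}}<\theta_0^{t}$ for \emph{all} $t\ge t_0$, i.e. $\log\bigl(C^{1/m}(Km+1)^{1/m}\bigr)<t(\tau_{\min}+\log\theta_0)$ for all $t\ge t_0$; since the right side is increasing in $t$ (as $\tau_{\min}+\log\theta_0>0$) its infimum over $t\ge t_0$ is $t_0(\tau_{\min}+\log\theta_0)>0$, so it is enough to pick $m$ with $\tfrac1m\log\bigl(C(Km+1)\bigr)<t_0(\tau_{\min}+\log\theta_0)$, which is possible since the left side tends to $0$. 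Fixing such $m$ (hence $\bar\delta_0$), I then choose $n_0$ a multiple of $m$ large enough that the bounded prefactor is absorbed, giving $\cG_{n_0}^{\delta_0}(W,t)<\theta_0^{n_0 t}$; the inequality $S_{n_0}^{\bar\delta_0}(W,t)\le\cG_{n_0}^{\delta_0}(W,t)$ follows from \eqref{careful} (each short piece at scale $\bar\delta_0$ sits inside a unique cell at scale $\delta_0$, and the weights compare since $\Sigma_{n_0}\tau$ is evaluated along the same orbit and the $C^0$ norm over the smaller curve is dominated).

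The main obstacle I anticipate is the bookkeeping in the cardinality estimate: making sure the cutting bound "$K\ell+1$ components" is applied legitimately, namely that after each block of $\le 2m$ iterates every resulting smooth piece is again a stable curve of length $\le\delta_0$ in $\hW^s$ (so the bound re-applies on the next block), and correctly handling the subdivision step in the definition of $\cG_n^\delta$ — subdivision increases the count by at most a bounded factor per unit length and never beyond what the singularity-cutting already forces, because pieces are capped at length $\delta_0$. One must also treat the non-multiple-of-$m$ remainder and the passage $\bar\delta_0\to\delta_0$ cleanly via \eqref{careful} and \eqref{convent}. None of this is deep, but it is where the argument could go wrong if the uniformity in $W$ (over all short $W\in\hW^s$) and in $t\ge t_0$ is not tracked carefully; the quantifier order — choose $m$ (and $\bar\delta_0$) depending only on $t_0,\theta_0$, then $n_0$ — is what makes the statement come out as written.
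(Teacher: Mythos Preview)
Your approach has a genuine gap at the step where you claim ``we are free to also enlarge $m$.'' In the paper, $m$ (and with it $\delta_0=\delta_0(m)$) is a \emph{fixed} global constant; the lemma asks for a bound on $\cG_{n_0}^{\delta_0}(W,t)$ at this fixed scale. If you iterate in blocks of length $m$ at scale $\delta_0$, after each block the pieces can have grown long (stable curves expand under $T^{-1}$), so subdivision at scale $\delta_0$ multiplies the count by some fixed $C\approx 2L_0/\delta_0$, yielding a per-step growth rate $(C(Km+1))^{1/m}$ that is a \emph{fixed number strictly greater than $1$}. To have $(C(Km+1))^{n/m}e^{-tn\tau_{\min}}<\theta_0^{nt}$ for all $t\ge t_0$ you need $(C(Km+1))^{1/m}<e^{t_0(\tau_{\min}+\log\theta_0)}$; for small $t_0$ the right side is arbitrarily close to $1$, and you have no mechanism to shrink the left side. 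Enlarging the block length does not help: at scale $\delta_0$ the complexity bound ``$\le K\ell+1$ pieces'' is only guaranteed for $\ell\le 2m$, and if you work instead at a smaller scale $\bar\delta_0$ the subdivision constant $\approx L_0/\bar\delta_0$ blows up. (Your parenthetical ``subdivision only helps the counting'' is also backwards: subdivision \emph{increases} $\#\cG_n^{\delta_0}(W)$.)

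The paper's proof avoids iteration and subdivision entirely by means of one ingredient you are missing: the length bound $|T^{-j}W|\le C'|W|^{2^{-s_0 j}}$ from \cite[Ex.~4.50]{CM}. Given any $n_0$, choose $\bar\delta_0$ so small that (i) $|W|\le\bar\delta_0$ forces $T^{-n_0}W$ to have at most $Kn_0+1$ connected components, and (ii) $|T^{-j}W|\le\delta_0$ for all $0\le j\le n_0$, so that \emph{no subdivision at scale $\delta_0$ occurs}. Then directly $\cG_{n_0}^{\delta_0}(W,t)\le(Kn_0+1)e^{-tn_0\tau_{\min}}$ --- a single block, no iteration --- and the same cardinality bound applies to $S_{n_0}^{\bar\delta_0}(W)$ since short pieces are never created by subdivision. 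One then picks $n_0$ so that $(Kn_0+1)^{1/n_0}\le \theta_0^{t_0}e^{\tau_{\min}t_0}$, and the bound propagates to all $t\ge t_0$ because $\theta_0 e^{\tau_{\min}}>1$. (As a minor point, your invocation of \eqref{careful} for $S_{n_0}^{\bar\delta_0}\le\cG_{n_0}^{\delta_0}$ is off: \eqref{careful} concerns \emph{long} pieces at the finer scale, not short ones.)
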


See also   \cite[Lemma 3.1(a)]{Ca}.

\begin{proof} Clearly,
$\sup -t\tau \le -t \tau_{\min}<0$ if $t>0$.  For any $n_0\ge 1$, there exists $\bar \delta_0(n_0)=\frac{\delta_0}{2^N}$ such that any  $W\in \hW^s$  with  $|W|<\bar \delta_0$  is such that $T^{-n_0}(W)$
has at most $(Kn_0+1)$ connected components \cite[Lemma 5.2]{Ch}. 
In addition using \cite[Ex. 4.50]{CM} as in \cite[Proof of Lemma 5.1]{BD1},  we have
$|T^{-j}W|\le  C'  |W|^{2^{-s_0 j}}$ for  a uniform $C'>0$ and  all $j\ge 1$  (see also \cite[Lemma~3.1]{Ca}).
Up to taking smaller $\bar \delta_0$, depending on $\delta_0$
(and $n_0$),
we can assume that $|T^{-j}W|\le \delta_0$ for all $0\le j\le n_0$. 
Then, for  $|W|\le \bar \delta_0$,  there can be no additional subdivisions of $T^{-n_0}(W)$ due to pieces growing
longer than $\delta_0$, so that 
\begin{equation}
\label{complexity step}
\cG_{n_0}^{ \delta_0}(W,t) 
\le (K n_0+1) e^{-tn_0  \tau_{\min}} \, .
\end{equation}
The same bound applies to $S_{n_0}^{\bar \delta_0}(W,t)$, since any element of $S_{n_0}^{\bar \delta_0}(W)$
must be created by a genuine cut by a singularity, not an additional subdivision due to pieces growing longer than
$\bar \delta_0$.
For any fixed $t_0>0$ and $\theta_0\in (e^{-  \tau_{\min}},e^{-  \tau_{\min}/2})$, we can find  $n_0=n_0(t_0, \theta_0)\ge  2$ such that 
$(Kn_0+1)^{1/n_0} \le \theta_0^{t_0} e^{\tau_{\min} t_0}$. 
Since $ \theta_0^{t_0} e^{\tau_{\min} t_0}\le  \theta_0^{t} e^{\tau_{\min} t}$ for all $t\ge t_0$, it
follows that \eqref{eq:theta} holds for $\bar \delta_0=\bar \delta_0(n_0,\delta_0)$.
\end{proof}

Lemma~\ref{lem:N step}
 implies  the following analogue\footnote{See  \cite[Lemma 3.1(b)]{Ca} for the replacement for \cite[Lemmas 3.3--3.4, $\zeta\ne 0$]{BD2}, using a logarithmic
weight with $\gamma>0$ as in \cite{BD1}.} of \cite[Lemmas 3.3--3.4, $\zeta=0$]{BD2}:
 
\begin{lemma}[Growth Lemma]
\label{lem:short growth} 
Fix $\theta_0\in (e^{-  \tau_{\min}}, e^{-\tau_{\min}/2})$
and $t_0 > 0$.
Suppose $\delta \le \delta_0$ and $m_1(\delta) \ge n_0(t_0, \theta_0)$ are such that any $W \in \hW^s$
with $|W| \le \delta$ 
has the property that $W \setminus \cS_{-j}$ comprises
at most $Kj + 1$ connected components 
for all $1 \le j \le 2m_1$.
Then for any $t \ge t_0$ and each $W \in \hW^s$ with $|W| \le \delta$, we have
 \begin{equation}
 \label{eq:short growth up}
  \cI^{\delta}_n(W,t)
 	\le \theta_0^{nt} \, , \, \forall n\ge m_1\, , 
 \end{equation}
\begin{equation}
 \label{eq:short growth up small n}
  \cI^{\delta}_n(W,t)
 	\le Km_1 \theta_0^{nt} \, , \,  \forall n < m_1\, , 
 \end{equation}
and, setting $L_0 = \pi \sqrt{1 + \cK_{\min}^{-2}}$,
\begin{equation}
  \label{eq:extra growth up} 
  \cG^{\delta}_n(W,t) \le   \frac{2 L_0  }{C_1 \delta}\, Q_n(t)  \, , \forall  n\ge 1 \, .
\end{equation}
\end{lemma}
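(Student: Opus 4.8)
The strategy is to deduce all three bounds from the $n$-Step Expansion (Lemma~\ref{lem:N step}) by the standard "fragmentation versus contraction" iteration, with $\cI^\delta_n(W,t)$ playing the role that "curves with no long ancestor" plays in the usual growth-lemma bookkeeping. First I would establish \eqref{eq:short growth up}. The key observation is that a curve $V\in\cI^\delta_n(W)$ has, by definition, no ancestor of length $\ge\delta/3$ after the initial curve $W$; so tracking $V$ backwards through time, every intermediate piece is short, and in particular at each step of length $n_0$ the relevant ancestor is a curve of length $\le\delta$ whose $n_0$-fold preimage falls under the hypothesis of Lemma~\ref{lem:N step} (we use here that $m_1(\delta)\ge n_0(t_0,\theta_0)$, and that $W\setminus\cS_{-j}$ has at most $Kj+1$ components for $j\le 2m_1$, so the subdivision convention introduces no spurious pieces on these scales). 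Writing $n = q n_0 + r$ with $0\le r< n_0$, one telescopes: group the first $qn_0$ iterates into $q$ blocks of length $n_0$, apply \eqref{eq:theta} to each block using submultiplicativity of $|e^{-t\Sigma\tau}|_{C^0}$ along the orbit (i.e. $\Sigma_n\tau = \Sigma_k\tau + (\Sigma_{n-k}\tau)\circ T^k$), and bound the leftover $r<n_0$ iterates trivially by $e^{-tr\tau_{\min}}\le 1$. Since $\theta_0^{n_0 t}$ is the per-block gain and $\theta_0 < e^{-\tau_{\min}/2}<1$, we get $\cI^\delta_n(W,t)\le \theta_0^{qn_0 t}\le \theta_0^{(n-n_0)t}$; absorbing the bounded factor $\theta_0^{-n_0 t}\le\theta_0^{-n_0 t}$ — here one must be slightly careful, since this factor depends on $t$; the clean fix is to note $\theta_0^{n_0 t}<1$ forces $q$ blocks to already beat $\theta_0^{nt}$ once $n\ge m_1$ is large enough that the leftover and the "missing" partial block are compensated, which is exactly the role of choosing $m_1\ge n_0$. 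So \eqref{eq:short growth up} follows, and \eqref{eq:short growth up small n} follows from the same telescoping with at most one block, the factor $Km_1$ absorbing the at most $Km_1+1$ pieces produced in $n<m_1$ steps together with the missing contraction.

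Next I would prove \eqref{eq:extra growth up}. The idea is to decompose an arbitrary $V\in\cG^\delta_n(W)$ according to its \emph{most recent} long ancestor: there is a unique $0\le k\le n$ and a unique $U\in\cG^\delta_k(W)$ with $|U|\ge\delta/3$ (or $U=W$) such that $V$, viewed in $\cG^{\delta}_{n-k}(U)$ after relabeling time, lies in $\cI^\delta_{n-k}(U)$. Summing the weight over all $V$ with this fixed $(k,U)$ gives, by \eqref{eq:short growth up}–\eqref{eq:short growth up small n} applied to $U$ (legitimate since $|U|\le\delta$ and $U$ inherits the complexity hypothesis) and the cocycle identity for $\Sigma_n\tau$, a bound of the form $|e^{-t\Sigma_k\tau}|_{C^0(U)}$ times a summable-in-$(n-k)$ factor $\le Km_1\theta_0^{(n-k)t}$. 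Now sum over the long ancestors $U$ at each level $k$: the long curves at level $k$ are disjoint subsets of $T^{-k}W$ of length $\ge\delta/3$, hence there are at most $(2L_0)/( C_1\delta)\cdot(\text{length of }T^{-k}W)\le (2L_0)/(C_1\delta)$ of them once one bounds $|T^{-k}W|\le L_0$ by the total length of $\partial Q$ (this is where $L_0 = \pi\sqrt{1+\cK_{\min}^{-2}}$ enters, as an upper bound on the length of a connected stable curve in $M$, using the hyperbolicity expansion \eqref{eq:hyp}/\eqref{eq:complex} to control overlaps). Comparing $\sum_U |e^{-t\Sigma_k\tau}|_{C^0(U)}$ with $Q_k(t) = \sum_{A\in\MM_0^k}|e^{-t\Sigma_k\tau}|_{C^0(A)}$ — each long stable curve at level $k$ sits inside one element of $\MM_0^k$, and only boundedly many share the same element — yields $\cG^\delta_n(W,t)\le \frac{2L_0}{C_1\delta}\sum_{k=0}^n Q_k(t)\,\theta_0^{(n-k)t}\cdot(\text{const})$. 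Finally one uses that $Q_k(t)$ grows like $e^{kP_*(t)}$ while $\theta_0^{(n-k)t}$ decays, together with the elementary inequality $Q_{k}(t)\theta_0^{(n-k)t}\le C\,Q_n(t)$ coming from submultiplicativity $Q_{k}(t)Q_{n-k}(t)\ge Q_n(t)$ and the crude bound $Q_{n-k}(t)\le (Km_1+1)^{?}\cdots$ — more precisely, since each $A\in\MM_0^n$ has an ancestor in $\MM_0^k$ and the number of $A$ with a given ancestor is controlled, one gets directly $Q_k(t)\,|e^{-t\Sigma_{n-k}\tau}|\lesssim Q_n(t)$ — so the geometric sum over $k$ collapses to $\frac{2L_0}{C_1\delta}Q_n(t)$ after adjusting constants, as claimed (the paper's clean constant suggests the bookkeeping is arranged so no extra factor survives, presumably by comparing $\cG^\delta_n(W)$ to $\MM_0^n$ directly via the canonical partition rather than through intermediate sums).

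The main obstacle is the last comparison in \eqref{eq:extra growth up}: getting the clean constant $2L_0/(C_1\delta)$ with no leftover geometric-series factor requires doing the decomposition by \emph{most recent long ancestor} carefully and comparing directly to the combinatorial quantity $Q_n(t)$, rather than the looser route through $\sum_k Q_k(t)\theta_0^{(n-k)t}$ which would leave a factor $(1-\theta_0^{t})^{-1}$. I expect the actual argument instead bounds $\cG^\delta_n(W,t)$ by covering $T^{-n}W$ by at most $(2L_0)/(C_1\delta)$ curves of length $\ge\delta/3$ which, being stable curves, each lie in a distinct element of $\MM_0^n$ up to bounded multiplicity — essentially an injectivity/counting statement for long pieces — so that the sum over $\cG^\delta_n(W)$ is dominated term-by-term by $(2L_0)/(C_1\delta)$ copies of the sum defining $Q_n(t)$. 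The hyperbolicity bound \eqref{eq:hyp} and the complexity bound \eqref{eq:complex} are what make this covering count uniform in $n$, and verifying that $L_0$ is a genuine upper bound for the length of a connected component of a stable curve in $M$ (via the geometry of the scatterers, whence the $\sqrt{1+\cK_{\min}^{-2}}$) is the one geometric input that must be checked by hand.
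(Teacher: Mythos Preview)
Your telescoping idea for \eqref{eq:short growth up} is right in spirit but has a real gap in the treatment of the leftover block. If you write $n=qn_0+r$ with $0\le r<n_0$ and bound each $n_0$-block by $\theta_0^{n_0t}$, the remaining $r$ steps still produce up to $Kr+1$ pieces, so the total is at best $(Kr+1)e^{-tr\tau_{\min}}\theta_0^{qn_0t}$. To reach $\theta_0^{nt}$ you would need $(Kr+1)\le(\theta_0 e^{\tau_{\min}})^{rt_0}$ for every $1\le r<n_0$, and since $(Kr+1)^{1/r}$ is \emph{largest} for small $r$, this fails in general (try $r=1$ with $t_0$ small). Your proposed fix---that ``$m_1\ge n_0$ large enough'' absorbs the leftover---does not work because $m_1$ is given in the hypothesis, not chosen. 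The paper's remedy is to block by $m_1$ rather than $n_0$: write $n=qm_1+\ell$ with $0\le\ell<m_1$, and merge the leftover with one block into a single block of size $m_1+\ell\le 2m_1$ (this is exactly why the hypothesis controls complexity up to $2m_1$). The crucial observation you are missing is that $(Kn+1)^{1/n}$ is \emph{decreasing} for $n\ge 2$, so the inequality $(Kn_0+1)^{1/n_0}\le e^{\ve t_0}$ (where $\ve=\tau_{\min}+\log\theta_0>0$) automatically propagates to $(Km_1+1)^{1/m_1}$ and $(K(m_1+\ell)+1)^{1/(m_1+\ell)}$. This yields directly $\cI^\delta_n(W,t)\le e^{\ve t_0 n - t n\tau_{\min}}\le\theta_0^{nt}$, with no leftover factor. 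The bound \eqref{eq:short growth up small n} then follows from a single application of the complexity bound.

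For \eqref{eq:extra growth up}, your ``most recent long ancestor'' decomposition is unnecessary and, as you suspected, would leave an extraneous geometric-series constant. The paper's argument is exactly the direct one you sketch in your final paragraph, but the counting goes the other way: each $W_i\in\cG^\delta_n(W)$ lies in a unique $A\in\MM_0^n$, and the question is how many $W_i$ can share a given $A$. Two $W_i$'s in the same $A$ can only have been separated by an artificial subdivision at some intermediate time $k\le n$; at that time the piece had length $\ge\delta/2$, and under the remaining $n-k$ backward iterates it expands by at least $C_1\Lambda^{n-k}\ge C_1$ (from \eqref{eq:hyp}), so $|W_i|\ge C_1\delta/2$. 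Since any connected stable curve in $A$ has length at most $L_0=\pi\sqrt{1+\cK_{\min}^{-2}}$, at most $2L_0/(C_1\delta)$ of the $W_i$ fit in $A$. Combined with the trivial $|e^{-t\Sigma_n\tau}|_{C^0(W_i)}\le|e^{-t\Sigma_n\tau}|_{C^0(A)}$, this gives the clean constant with no appeal to \eqref{eq:short growth up}, submultiplicativity of $Q_n$, or any sum over intermediate times.
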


 \begin{proof}
 Let $n_0(t_0,\theta_0)$ and $\bar\delta_0(n_0,\delta_0)$ be given by Lemma~\ref{lem:N step}.
 By choice of $n_0$, if $\ve = \tau_{\min} + \log \theta_0 > 0$, then 
 $(Kn_0+1)^{1/n_0} \le e^{\ve t_0}$.  Remark that $(Kn+1)^{1/n}$ decreases to 1 for $n \ge 2$ since $K \ge1$.
 Thus $(Kn+1)^{1/n} \le e^{\ve t_0}$ for all $n \ge n_0$.
 With this observation,  for $\delta$ and $m_1$ as in the statement of the lemma, 
 if $n < m_1$ then \eqref{eq:short growth up small n} follows immediately since each element
 of $\cI^{\delta}_n(W)$ must terminate on an element of $\cS_{n}$,
 \[
 \cI^{\delta}_n(W,t) \le (Kn+1) e^{-tn \tau_{\min}} \le K m_1 \theta_0^{nt} \, .
 \]
 On the other hand, for $n \ge m_1$, we write $n = qm_1 + \ell$, with $q \ge 1$ and $0 \le \ell < m_1$.
 Then, since elements of $\cI_n^\delta(W)$ have been short at each intermediate step, we use \eqref{complexity step} once with $m_1 + \ell$ in place of $n_0$ and $q-1$ times
 with $m_1$ in place of $n_0$ to obtain,
 \[
 \begin{split}
 \cI^\delta_n(W, t) & \le \sum_{V_j \in \cI^\delta_{(q-1)m_1}(W)} \left| e^{-t \Sigma_{(q-1)m_1} \tau} \right|_{C^0(V_j)} \sum_{W_i \in \cI^\delta_{m_1+\ell}(V_j)} \left| e^{-t \Sigma_{m_1+\ell} \tau } \right|_{C^0(W_i)} \\
 & \le (Km_1 +1)^{q-1}(K(m_1+\ell) + 1) e^{-t n \tau_{\min}} \le e^{\ve  t_0 n - t n \tau_{\min}} \, ,
 \end{split}
 \]
 which implies \eqref{eq:short growth up} by choice of $n_0$ and $\ve$.

 Finally, to 
 show \eqref{eq:extra growth up}, first note that each $W_i \in \cG^\delta_n(W)$ is contained in a single element of $\MM_0^{n}$,
  and that multiple $W_i \in \cG^\delta_n(W)$ only belong to the same element of $\MM_0^n$
 as a result of artificial subdivisions at time $n$ or at a previous step.
 Since 
 $|T^{-n}V| \ge C_1 \Lambda^n |V|$ for any stable curve 
 $|V|$ (due to \eqref{eq:hyp}), 
  each such curve must have length at least $C_1 \delta/2$.
 Thus there
can be at most  $2L_0/(C_1\delta)$  elements of $\cG^\delta_n(W)$ in one element of $\MM_0^{n}$,
where $L_0 = \pi \sqrt{1 + \cK_{\min}^{-2}}$ is the maximum length of a stable curve in $\MM_0^n$
using \cite[\S 3]{BD1}. 
Note also that $|e^{-t \Sigma_n \tau}|_{C^0(W_i)} \le |e^{-t \Sigma_n \tau}|_{C^0(A)}$ whenever $W_i \subset A \in \MM_0^n$. This gives the required bound.
\end{proof}


\section{Preparations}\label{boot}

\subsection {Small Singular Pressure. Two Bounds from \cite{Ca}}\label{SSP}

Recall that $n_1$ and $\delta_1$ were defined by \eqref{defn1}.
We say that Small Singular Pressure \#1 (SSP.1)   holds at\footnote{Our formulation of (SSP) corresponds to the choice $\ve=1/4$ in
the formulation of (SSP) in \cite{Ca}, and in the analogous condition appearing in \cite[Cor. 5.3]{BD1}.}
$t\ge 0$ 
if  
		\begin{align}\label{keyb} 
	\mbox{there exist } &\, \delta_t=\delta
	=\frac{\delta_0}{2^{N_t}}\in (0,  \delta_1]\,\,  \mbox{ and a finite }  \, n_t=n_t
	\ge n_1
	\\ 	
	\nonumber \mbox{ such that } & \quad \frac{ S_n^{\delta_t}(W,t)}
	{\cG_n^{\delta_t}(W,t)} \le 
	1/4\, , \,
	\forall n\ge n_t\, , \,  \forall W \in \hW^s  \mbox{ with }   |W| \ge  \delta_t/3\, ,
	\end{align} 
and, in addition,
\begin{align}\label{keyb3} 
 \sum_{n\ge n_t} \,\, \sup_{\substack{W \in \hW^s\, \\    |W|\ge \delta_t/3}}\,\,
\frac{e^{-nt \tau_{\min}} }  { L^{\delta_t}_n(W,t)} < \infty \, 
\end{align} 
together with its
``time-reversal,'' obtained by replacing $T$ with its inverse $T^{-1}$, 
 $\hW^s$ by $\hW^u$,
and replacing $\tau$ with $\tau \circ T^{-1}$ (that is, replacing
$\Sigma_n\tau$ with  $\sum_{i=1}^n \tau \circ T^{-i}= (\Sigma_n \tau )\circ T^{-n}$), both hold.

Assume that \eqref{keyb}  and \eqref{keyb3} hold at
$t\ge 0$ for  $\delta_t$,  and $n_t$. Then we say that  Small Singular Pressure \#2 (SSP.2) holds at $t$
  if\footnote{In the analogous condition of \cite[Cor 5.3]{BD1}, there
exists a uniform $C_t$  such that
$n^*_t(|W|,\delta_t,
1/4)=C_t n_t\frac{|\log (|W|/\delta_{{t}})|}{|\log 1/4|}$.}
		\begin{align}\label{keyb2}
	\mbox{for any } & W \in \hW^s \mbox{ there exists } n^*_t(|W|,\delta_t) \in [n_t, \infty)
	\mbox { such that }\\
	 \nonumber
	 & 
	  \frac{ S_n^{\delta_t}(W,t)}
	{\cG_n^{\delta_t}(W,t)} \le \frac 1 2
	\, , \,   \forall n\ge n^*_t (|W|,\delta_t)\, ,
	\end{align} 
together with its time-reversal (in the sense defined above) both hold.

\medskip

Note that the time-reversal of conditions \eqref{keyb},
\eqref{keyb3}, and \eqref{keyb2} involve stable curves for $T^{-1}$, that is, unstable curves for
$T$.  In view of  the time reversibility of the billiard dynamics (see \cite[Sect.~2.14]{CM} for the precise involution $\iota$), since $\tau \circ T^{-1}=\tau\circ \iota$, and $\tau \circ \iota$ is precisely the free flight time under $T^{-1}$,the conditions for $T$ and $\tau$ are equivalent\footnote{This equivalence does not always hold in \cite{Ca} where $t\tau$
is replaced by a more general  $g$.} with those for $T^{-1}=\iota T\iota$ and $\tau \circ T^{-1}=\tau\circ \iota$.

\medskip

To establish Theorem~\ref{Car}, Carrand proved\footnote{In particular, Carrand shows that
\eqref{keyb}  and \eqref{keyb3} imply the analogues  \cite[Prop. 3.7 and  3.10]{Ca}
	of \cite[Prop. 3.14 and 3.15]{BD2} for the Banach norm of \cite{BD1}. He does not
	get a spectral gap.} the following consequence of SSP:

\begin{proposition}[{\cite[Theorem 1.2]{Ca}}]\label{Car3} Assume\footnote{See also Lemma~\ref{mild}.} \eqref{sparse} and that
 SSP.1  and SSP.2 hold\footnote{SSP.1 suffices to construct the invariant
 measure $\mu_t$ and check it is $T$-adapted. SSP.2 is used to show ergodicity, which gives that
 $\mu_t$ is an equilibrium state for  $-t\tau$, as well as the other claims.} at
$t>0$.
	Then  
	there is  a unique  equilibrium measure $\mu_t$ for  $-t\tau$, this measure is  $T$-adapted,
	 charges nonempty open sets, and is Bernoulli. In addition, $P_*(t)=P(t)$.
\end{proposition}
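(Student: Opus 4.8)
The plan is to follow the transfer-operator scheme of \cite{BD1,BD2} in the form adapted by Carrand \cite{Ca} to the weight $e^{-t\tau}$ produced by Abramov's formula. Fix $t>0$ for which \eqref{sparse}, SSP.1 and SSP.2 hold, and set $\lambda_t:=e^{P_*(t)}$, the exponential growth rate of $Q_n(t)$. First I would introduce the weighted transfer operator acting on distributions by $\langle\mathcal{L}_t\mu,\psi\rangle=\langle\mu,e^{-t\tau}\,\psi\circ T\rangle$ on the anisotropic Banach space $\mathcal{B}$ of distributions on $M$ built, as in \cite[\S 5--6]{BD1} and \cite[\S 3]{Ca}, by integrating test functions along cone-stable curves with the logarithmic modulus of continuity of exponent $\gamma>1$ (the $\gamma$ fixed in \S\ref{GL} via \eqref{sparse0}), together with its companion weak space $\mathcal{B}_w$. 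Using the $n$-Step Expansion (Lemma~\ref{lem:N step}), the Growth Lemma (Lemma~\ref{lem:short growth}, in particular \eqref{eq:extra growth up} rescaled by $\lambda_t^{-n}$), and SSP.1 at $t$ --- where \eqref{keyb} controls the fraction of weighted mass carried by short pieces and \eqref{keyb3} provides the matching lower bound on $L_n^{\delta_t}(W,t)$ --- one obtains uniform Lasota--Yorke inequalities $\|\mathcal{L}_t^n f\|_{\mathcal{B}}\le C\lambda_t^n\|f\|_{\mathcal{B}}+Cr^n\|f\|_{\mathcal{B}_w}$ with $r<\lambda_t$, and $\|\mathcal{L}_t^n f\|_{\mathcal{B}_w}\le C\lambda_t^n\|f\|_{\mathcal{B}_w}$; these are the analogues of \cite[Prop. 3.14, 3.15]{BD2} established in \cite[Prop. 3.7, 3.10]{Ca}.

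Since $\mathcal{B}\hookrightarrow\mathcal{B}_w$ is compact, a quasi-compactness (Hennion-type) argument shows $\mathcal{L}_t$ on $\mathcal{B}$ has spectral radius $\lambda_t$ and essential spectral radius at most $r<\lambda_t$. As $\mathcal{L}_t$ preserves the cone of nonnegative distributions, averaging $\lambda_t^{-n}\mathcal{L}_t^n$ applied to a Lebesgue-like distribution and extracting a weak-$*$ limit yields a nonnegative eigendistribution $\nu_t\in\mathcal{B}$ with $\mathcal{L}_t\nu_t=\lambda_t\nu_t$. By the time-reversibility of the billiard recalled after \eqref{keyb2} (the involution $\iota$, with $\tau\circ\iota=\tau\circ T^{-1}$), the time-reversed SSP.1 yields in the same way a nonnegative eigendistribution $\tilde\nu_t$ for $T^{-1}$ with weight $e^{-t\tau\circ T^{-1}}$, providing conditional densities along unstable curves. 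Pairing $\nu_t$ (conditionals along stable curves) with $\tilde\nu_t$ along the local product structure of $M$ gives a measure on $M$; the two eigenvalue equations translate into $T$-invariance, and we normalise to a probability $\mu_t$. That $\mu_t$ charges every nonempty open set follows because the Growth Lemma forces $\nu_t$ and $\tilde\nu_t$ to be positive on every curve of positive length. Finally, the $T$-adapted property \eqref{adapt} follows from the estimate $\mu_t(\{d(x,\cS_{\pm1})<\epsilon\})\le C_t|\log\epsilon|^{-\gamma}$ with $\gamma>1$ (see footnote~\ref{foot:adapt}), a consequence of the logarithmic-modulus construction of $\mathcal{B}$, upon integrating the tail. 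This whole step uses only SSP.1.

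Condition SSP.2 is then used to show $(T,\mu_t)$ is ergodic: a Hopf-type argument in which the $|W|$-dependent threshold $n_t^*(|W|,\delta_t)$ and the $1/2$ bound in \eqref{keyb2}, together with its time-reversal, are precisely what make the stable/unstable holonomy and absolute-continuity estimates go through, so that any two ergodic components must coincide (the $T$-adapted property, whose usefulness is highlighted by \cite{LM}, is used here to control recurrence to $\cS_{\pm1}$). Ergodicity, combined with hyperbolicity \eqref{eq:hyp} and the adapted property, lets one compute the entropy via the Ledrappier--Young formula, identifying the conditionals of $\mu_t$ on unstable manifolds with normalisations of the reference eigendistribution, exactly as for the unstable-Jacobian potential in \cite{BD2}; this gives $h_{\mu_t}(T)=P_*(t)+t\int\tau\,d\mu_t$. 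Since $P_*(t)\ge P(t)$ always and $P(t)\ge h_\mu(T)-t\int\tau\,d\mu$ for every $T$-invariant $\mu$ (by affineness of $\mu\mapsto h_\mu(T)-t\int\tau\,d\mu$ and the ergodic decomposition), we conclude $P_*(t)=P(t)$ and that $\mu_t$ realises $P(t)$. Bernoullicity is obtained by upgrading ergodicity to the Kolmogorov property (Hopf argument for hyperbolic measures with local product structure) and then to the Bernoulli property via the Chernov--Haskell/Ornstein--Weiss theory for hyperbolic systems with singularities. For uniqueness, an ergodic equilibrium measure $\mu$ for $-t\tau$ satisfies $h_\mu(T)-t\int\tau\,d\mu=P(t)=P_*(t)$, i.e. equality in the entropy inequality; the Ledrappier--Young characterisation of equality forces the conditionals of $\mu$ on unstable (resp. stable) manifolds to be absolutely continuous with respect to those of the reference eigendistributions, and a Hopf argument using the ergodicity above then gives $\mu=\mu_t$.

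The step I expect to be the main obstacle is the absence of a spectral gap: as explained after Corollary~\ref{cormain}, no Banach norm in this setting provides one (there is no analogue of the spectral-gap lemmas of \cite{BD2}). Consequently the simplicity of the leading eigenvalue is unavailable, so both the construction of $\nu_t$ and $\tilde\nu_t$ --- which must proceed through compactness and Ces\`aro averaging rather than through a spectral projector --- and, above all, the uniqueness of the equilibrium measure must be handled through the structural route: identifying the conditional measures of equilibrium states on the invariant manifolds and exploiting SSP.2 for ergodicity and the holonomy estimates, rather than spectrally. A secondary delicate point is checking that the Lasota--Yorke bounds on the logarithmic-modulus spaces decompose genuinely along the two halves of SSP.1 --- \eqref{keyb} feeding the strong-norm contraction and \eqref{keyb3} the control of the weak norm from below.
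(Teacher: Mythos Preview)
The paper does not contain a proof of this proposition: it is quoted verbatim from Carrand \cite[Theorem~1.2]{Ca}, and the only information the present paper gives about the argument is in the footnotes to the statement and in the paragraph after Corollary~\ref{cormain}. Your outline is broadly consistent with those hints --- transfer operator on the logarithmic-modulus spaces of \cite{BD1}; SSP.1 to construct $\mu_t$ and verify the $T$-adapted property; SSP.2 to obtain ergodicity, from which the equilibrium property, Bernoullicity, uniqueness, and $P_*(t)=P(t)$ follow --- so at the level of overall strategy there is nothing in this paper to compare against.

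One technical point in your sketch is inaccurate and deserves correction. You assert that SSP.1 yields a Lasota--Yorke inequality $\|\mathcal L_t^n f\|_{\mathcal B}\le C\lambda_t^n\|f\|_{\mathcal B}+Cr^n\|f\|_{\mathcal B_w}$ with $r<\lambda_t$, and hence quasi-compactness via Hennion. This is precisely what is \emph{not} available here: the paper states explicitly (paragraph after Corollary~\ref{cormain}, and the footnote attached to Proposition~\ref{Car3}) that no Banach norm giving a spectral gap was found, because there is no analogue of \cite[Lemmas~3.3--3.4]{BD2} for $\varsigma\ne 0$ (equivalently \cite[Lemma~3.1]{Ca} for $\gamma\ne 0$). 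What \cite[Prop.~3.7, 3.10]{Ca} actually provide are matching upper and lower bounds of the same exponential order $\lambda_t^n$ (these are \eqref{eq:lower fixed} and \eqref{eq:upper fixed} in the present paper), not a two-term inequality with a contracting remainder. The eigendistributions $\nu_t$, $\tilde\nu_t$ must therefore be produced directly by Ces\`aro averaging plus compactness of the embedding $\mathcal B\hookrightarrow\mathcal B_w$, as in \cite{BD1}, without any Hennion-type spectral argument. You correctly identify this obstacle in your final paragraph, but the Lasota--Yorke and quasi-compactness claims in your first paragraph contradict it and should be removed.
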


We state more facts from \cite{Ca} and their consequences. Setting 
\begin{equation}\label{deftC}
t_C=\frac {\log \Lambda}{\tau_{\max} -\tau_{\min}}
>0\, ,
\end{equation}
\cite[Lemmas 3.3  and 3.4 and Corollary 3.6]{Ca} give that 
 each $t\in [0,t_C]$
satisfies SSP (that is, \eqref{keyb}, \eqref{keyb3}, and \eqref{keyb2}) for $\delta_t>0$, $n_t<\infty$,
and $C_t<\infty$. 

\medskip

The key to our 
bootstrap argument is the following definition.

\begin{definition}[Largest SSP Parameter]
\begin{equation}\label{bart}
t_\infty:=\sup\bigl \{ t'\ge 0 \mbox{ such that } \eqref{keyb}, \eqref{keyb3}, \mbox{ and } \eqref{keyb2} \mbox{ hold for all } 0\le t\le t'\bigr \} \, .
\end{equation}
\end{definition}

We will use that $\delta_t$ and $n_t$ exist for all $t< t_\infty$.

By the results of \cite{Ca} recalled after \eqref{deftC}, we already know that $t_\infty\ge t_C>0$. 
We will bootstrap from this fact:
If $P(t_\infty)<0$,  then $t_\infty >\htop(\Phi^1)$, and Proposition~\ref{Car3}
implies Theorem~\ref{main}.
Otherwise,   Lemma~ \ref{lem:short small t>} below will establish  that any $0\le t<s_*$
satisfies \eqref{keyb}, \eqref{keyb3}, and  \eqref{keyb2}, where
 $s_*>t_\infty$ will be  constructed
in  Lemma~\ref{lem:t_*}.

\bigskip

We conclude this section with two key bounds due to Carrand
and a lemma which follows from them.
Assume that \eqref{keyb}  \eqref{keyb3} hold for $t$, then by \cite[Prop 3.7]{Ca} there exists $c_{0, t}>0$  such that
\begin{equation}
\label{eq:lower fixed}
\cG_n^{\delta_t}(W,t) \ge c_{0,t} e^{n P_*(t)}\, ,  \,\,  \forall n \ge 1\, ,\, \forall W\in \hW^s \mbox{ with  } |W| \ge \delta_t/3\, ,
\end{equation}
and by \cite[Prop 3.10]{Ca} there exists $c_{1,t}>0$ such that  
\begin{equation}
\label{eq:upper fixed}
Q_n(t) \le \frac 2 {c_{1,t}} e^{n P_*(t)}\, ,  \,\,  \forall n \ge 1 \, ,
\end{equation}

Observe that \eqref{eq:upper fixed} together with \eqref{eq:extra growth up}  give the upper bound
(to be used in the proof of Lemma~\ref{lem:low t>})
\begin{equation}
\label{eq:upper}
\cG_n^\delta(W,t) \le  { \frac{ 2 L_0}{C_1 \delta} } Q_n(t) 
\le { \frac{ 4 L_0}{C_1 \delta c_{1,t}} } e^{n P_*(t)}
\, ,\,  \forall n \ge 1\,  , \, \forall \delta \le \delta_0 \, .
\end{equation}

\smallskip

Finally, \eqref{keyb} and \eqref{eq:lower fixed} imply the following lower bound for any scale
$\delta = \delta_0/2^N$.

\begin{lemma}\label{thelabel}
For all $t \in (0, t_\infty)$ and  $\delta =\delta_0/2^N$, there exists $c_{0,t}(\delta)>0$ such that 
\begin{equation}
\label{eq:lower fixed'}
\cG_n^{ \delta}(W,t) \ge c_{0,t}(\delta) e^{n P_*(t)}\, ,  \,\,  \forall n \ge 1\, ,\, \forall W\in \hW^s \mbox{ with  } |W| \ge \delta/3\, .
\end{equation}
The time reversal of the statement holds for $T^{-1}$.
\end{lemma}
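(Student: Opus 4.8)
The plan is to deduce the bound \eqref{eq:lower fixed'} at an arbitrary dyadic scale $\delta = \delta_0/2^N$ from the bound \eqref{eq:lower fixed}, which is known at the canonical scale $\delta_t$ associated with $t$ by SSP.1. The point is that $\cG_n^{\delta}(W,t)$ and $\cG_n^{\delta_t}(W,t)$ count weighted sums over partitions of $T^{-n}W$ which differ only by artificial subdivisions; since the weight $e^{-t\Sigma_n\tau}$ is bounded on each piece by its value on the containing piece at the coarser scale, refining a partition can only increase the weighted sum, while coarsening it loses at most a bounded multiplicative factor. So the two quantities are comparable up to a constant depending on $\delta$, $\delta_t$ and $t$ but not on $n$ or $W$, and the exponential rate $e^{nP_*(t)}$ is unaffected.

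Concretely, I would argue in two cases according to whether $\delta \le \delta_t$ or $\delta > \delta_t$. If $\delta \le \delta_t$: given $W$ with $|W|\ge \delta/3$, I would (if necessary, after first flowing forward a bounded number of steps, or by noting $|W|\ge\delta/3$ already suffices when $\delta$ is small) relate $\cG_n^{\delta}(W,t)$ to $\cG_n^{\delta_t}(W,t)$ by observing that every element of $\cG_n^{\delta}(W)$ sits inside a unique element of $\cG_n^{\delta_t}(W)$ — this is exactly property \eqref{careful} (valid under the dyadic convention \eqref{convent}) — and on that containing element the weight $|e^{-t\Sigma_n\tau}|_{C^0}$ dominates. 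Summing the sub-pieces inside a fixed coarse piece, and using that by the expansion bound \eqref{eq:hyp} each stable curve in $\cG_n^{\delta_t}(W)$ of length $\ge \delta_t/3$ (say) can host only boundedly many sub-pieces of length between $\delta/2$ and $\delta$, one gets $\cG_n^{\delta_t}(W,t) \le C(\delta,\delta_t)\,\cG_n^{\delta}(W,t)$; combined with \eqref{eq:lower fixed} this yields \eqref{eq:lower fixed'} with $c_{0,t}(\delta) = c_{0,t}/C(\delta,\delta_t)$. If instead $\delta > \delta_t$: here one needs a short initial iterate to ensure enough length. Given $W$ with $|W|\ge\delta/3\ge\delta_t/3$, apply \eqref{eq:lower fixed} directly at scale $\delta_t$ to get $\cG_n^{\delta_t}(W,t)\ge c_{0,t}e^{nP_*(t)}$, then pass to the coarser scale $\delta$: every element of $\cG_n^{\delta}(W)$ that is long (length $\ge\delta/3$) contains elements of $\cG_n^{\delta_t}(W)$, and a bounded-distortion estimate for $e^{-t\Sigma_n\tau}$ — valid precisely because $\tau$ is piecewise Hölder, as emphasized in the introduction — shows the weight on a coarse piece is comparable (up to a factor depending only on $t$ and the Hölder data, not on $n$) to the weight on any sub-piece. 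One still has to account for the short pieces $S_n^\delta(W)$, but these are controlled via the $n$-Step Expansion (Lemma~\ref{lem:N step}) / Growth Lemma (Lemma~\ref{lem:short growth}): their total weighted contribution $S_n^{\delta}(W,t)$ decays like $\theta_0^{nt}$ with $\theta_0 < e^{-\tau_{\min}/2}$, hence is exponentially smaller than $e^{nP_*(t)}$ for $n$ large (since $P_*(t) > -t\tau_{\min} \ge 2\log\theta_0\cdot(t/ \cdots)$, i.e. $P_*(t)+t\tau_{\min}>0$ by Lemma~\ref{lem:t_*}), and for the finitely many small $n$ one simply absorbs the bound into the constant $c_{0,t}(\delta)$.

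The time-reversed statement for $T^{-1}$ follows verbatim, using the time-reversal versions of \eqref{keyb} and \eqref{eq:lower fixed} together with the equivalence between the conditions for $(T,\tau)$ and $(T^{-1},\tau\circ T^{-1})$ noted after \eqref{keyb2}.

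I expect the main obstacle to be the bookkeeping in the case $\delta > \delta_t$: one must carefully separate the contribution of long descendants (handled by \eqref{careful}/\eqref{eq:lower fixed} plus piecewise bounded distortion of $e^{-t\Sigma_n\tau}$) from the short ones (handled by the Growth Lemma), and check that the "loss" in passing between scales is a genuinely $n$-independent constant — in particular that the number of $\cG_n^{\delta_t}$-pieces inside a single $\cG_n^{\delta}$-piece is bounded uniformly in $n$, which rests on the uniform hyperbolicity bound \eqref{eq:hyp} giving a lower bound $C_1\delta_t/2$ on the length of long stable curves at time $n$, exactly as in the proof of \eqref{eq:extra growth up}. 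Everything else is routine, and the exponential rate $P_*(t)$ is never touched.
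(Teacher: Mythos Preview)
Your overall strategy --- reduce to \eqref{eq:lower fixed} at the canonical scale $\delta_t$ by comparing the weighted sums at the two scales --- is correct and matches the paper. However, two steps in your execution do not go through as written.

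\medskip
\textbf{The sub-case $\delta < \delta_t$ with $|W| \in [\delta/3,\, \delta_t/3)$.} You cannot invoke \eqref{eq:lower fixed} here, since that bound requires $|W| \ge \delta_t/3$. Your parenthetical ``noting $|W|\ge\delta/3$ already suffices'' is simply false; the alternative ``flowing forward a bounded number of steps'' is the right idea but must actually be carried out. The paper does this explicitly: by \eqref{eq:complex} and \eqref{eq:hyp} there is $n_W \le C'\log(\delta_t/\delta)$ (bounded independently of $W$ and $n$) such that some connected component $V$ of $T^{-n_W}W$ has $|V| \ge \delta_t/3$; one then applies the already-established sub-case to $V$ for $n - \bar n$ steps (with $\bar n = \max\{n_W, n_t\}$) and absorbs the factor $e^{-\bar n(\tau_{\max} + P_*(t))}$ into the constant. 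The finitely many small $n$ are handled by the trivial bound $\cG_n^\delta(W,t) \ge e^{-n\tau_{\max}}$. Note also that in the sub-case $|W| \ge \delta_t/3$ the paper uses \eqref{keyb} to pass from $L_n^{\delta_t}$ to $\cG_n^{\delta_t}$, not merely the ``boundedly many sub-pieces'' counting you describe (which, as you phrase it via \eqref{eq:hyp}, would give the inequality in the wrong direction).

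\medskip
\textbf{The case $\delta > \delta_t$.} Your proposed control of $S_n^\delta(W,t)$ via the Growth Lemma fails: Lemma~\ref{lem:short growth} bounds $\cI_n^\delta(W,t)$ (pieces short at \emph{every} intermediate step), not $S_n^\delta(W,t)$ (pieces short only at time $n$), and the latter is in general much larger. Your appeal to Lemma~\ref{lem:t_*} for ``$P_*(t) + t\tau_{\min} > 0$'' is also misplaced --- that lemma says nothing of the sort, and no such inequality is available for arbitrary $t \in (0, t_\infty)$. The paper bypasses all of this with a one-line counting argument: each element of $\cG_n^\delta(W)$ contains at most $3\delta/\delta_t$ elements of $L_n^{\delta_t}(W)$, and $S_n^{\delta_t}(W) \subset S_n^\delta(W)$, whence
\[
\cG_n^{\delta_t}(W,t) = S_n^{\delta_t}(W,t) + L_n^{\delta_t}(W,t) \le \Bigl(1 + \tfrac{3\delta}{\delta_t}\Bigr)\,\cG_n^\delta(W,t)\,,
\]
and then \eqref{eq:lower fixed} applies directly since $|W| \ge \delta/3 \ge \delta_t/3$. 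No Growth Lemma, no sign condition on $P_*(t)$, no large-$n$/small-$n$ split.
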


\begin{proof}
First, assume $\delta < \delta_t$.  Each element of $L_n^{\delta_t}(W)$
contains at least $\delta_t/(3 \delta)$ elements of $\cG_n^\delta(W)$.  So if $|W| \ge \delta_t/3$, then
\eqref{keyb} and bounded
distortion for $\tau$ give
\begin{equation}
\label{eq:first lower}
\cG_n^\delta(W,t) \ge  \frac{e^{- tC}\delta_t}{3 \delta} L_n^{\delta_t}(W,t)
\ge  \frac{e^{- tC}\delta_t}{4 \delta} \cG_n^{\delta_t}(W,t) 
\ge \frac{e^{- tC} \delta_t c_{0,t}}{4 \delta}  e^{n P_*(t)} \,  , 
\end{equation}
for all $n\ge n_t$, where we have used \eqref{eq:lower fixed} in the last step.

Next, if $|W| \in [\delta/3, \delta_t/3)$, then there exists $n_W \le C' \log(\delta_t/\delta)$ such that
$T^{-n_W}(W)$ has a connected component $V$ of length at least $\delta_t/3$.  This is because
while $T^{-n}W$ remains short, the number of components of $T^{-n}W$ is at most $Kn+1$ by
\eqref{eq:complex}
while $|T^{-n}W| \ge C_1 \Lambda^n |W|$ according to \eqref{eq:hyp}.  Thus setting
$\bar n = \max \{ n_W, n_t \}$, we apply \eqref{eq:first lower} to $V$ to estimate for $n \ge \bar n$.
\[
\cG_n^\delta(W,t) \ge \cG^\delta_{n - \bar n}(V, t) e^{- \bar n \tau_{\max}} 
\ge e^{- \bar n (\tau_{\max} + P_*(t) )} e^{- tC} \frac{\delta_t}{4 \delta} c_{0,t} e^{n P_*(t)} \, ,
\]
which proves \eqref{eq:lower fixed'} by definition of $\bar n$.
If $n < \bar n$, then trivially
\[
\cG_n^\delta(W,t) \ge e^{- n \tau_{\max}} 
\ge e^{- n |\tau_{\max} + P_*(t)|} e^{n P_*(t)}
\ge e^{- \bar n |\tau_{\max} + P_*(t)|} e^{n P_*(t)} \, .
\]

Finally, if $\delta \ge \delta_t$, then 
since each element of 
$\cG^{\delta}_n(W)$ contains at most $3\delta/\delta_t$ elements of $L^{\delta_t}_n(W)$ and
$S^{\delta_t}_n(W) \subset S^\delta_n(W)$, we have
\[
\cG_n^{\delta_t}(W,t) = S^{\delta_t}_n(W,t) + L^{\delta_t}_n(W,t)
\le S^{\delta}_n(W,t) + \frac{3 \delta}{\delta_t} \cG^{\delta}_n(W,t) 
\le \Big(1 + \frac{3 \delta}{\delta_t} \Big) \cG^{\delta}_n(W,t) \, ,
\]
which gives the required lower bound on $\cG^\delta_n(W,t)$, applying \eqref{eq:lower fixed}.

The time reversed statement of the lemma follows immediately using the reversibility of the billiard,
as explained earlier. 
\end{proof}


\subsection{First Key Lemma}\label{KL}

We start with the following easy observation:

\begin{lemma}\label{derP} For all $t>0$,
the following limit exists and belongs to   $ [-\tau_{\max},-\tau_{\min}]$:
$$  P'_{*,-}(t):= \lim_{s \uparrow t} \frac{P_*(t)-P_*(s)}{t-s}\, .
$$ 
\end{lemma}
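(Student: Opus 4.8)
The statement asserts that $P_*$ has a left derivative everywhere on $(0,\infty)$, and that this left derivative lies in $[-\tau_{\max},-\tau_{\min}]$. The natural approach is to exploit convexity of $P_*$ (stated in Theorem~\ref{Car}(a)) together with the explicit Lipschitz-type control coming from the weight $e^{-t\Sigma_n\tau}$ and the bounds $\tau_{\min}\le \tau\le \tau_{\max}$. First I would record the elementary two-sided estimate: for $0<s<t$ and any $W\in\hW^s$ and $n\ge 1$, pointwise on each $W_i$ one has
\[
e^{-n(t-s)\tau_{\max}}\,\big|e^{-s\Sigma_n\tau}\big|_{C^0(W_i)}\le \big|e^{-t\Sigma_n\tau}\big|_{C^0(W_i)}\le e^{-n(t-s)\tau_{\min}}\,\big|e^{-s\Sigma_n\tau}\big|_{C^0(W_i)},
\]
since $n(t-s)\tau_{\min}\le (t-s)\Sigma_n\tau\le n(t-s)\tau_{\max}$. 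Summing over $W_i\in\MM_0^n$ gives $e^{-n(t-s)\tau_{\max}}Q_n(s)\le Q_n(t)\le e^{-n(t-s)\tau_{\min}}Q_n(s)$, and taking $\frac1n\log$ and letting $n\to\infty$ yields
\[
-(t-s)\tau_{\max}\le P_*(t)-P_*(s)\le -(t-s)\tau_{\min},
\]
that is, $-\tau_{\max}\le \frac{P_*(t)-P_*(s)}{t-s}\le -\tau_{\min}$ for all $0<s<t$.

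\textbf{Existence of the left derivative and the interval bound.} Given the above, for fixed $t>0$ define $g(s):=\frac{P_*(t)-P_*(s)}{t-s}$ for $s\in(0,t)$; I want to show $\lim_{s\uparrow t}g(s)$ exists. Convexity of $P_*$ on $(0,\infty)$ (Theorem~\ref{Car}(a)) implies that the difference quotient $\frac{P_*(t)-P_*(s)}{t-s}$ is \emph{nondecreasing} in $s$ on $(0,t)$: this is the standard three-slope inequality for convex functions. A bounded monotone function has a limit, and the displayed two-sided estimate shows $g(s)\in[-\tau_{\max},-\tau_{\min}]$ for every $s$, so the limit $P'_{*,-}(t):=\lim_{s\uparrow t}g(s)=\sup_{s<t}g(s)$ exists and lies in $[-\tau_{\max},-\tau_{\min}]$. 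That is exactly the claim.

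\textbf{Main obstacle.} There is essentially no hard step here; the only point requiring a line of care is justifying that $P_*$ is genuinely convex on all of $(0,\infty)$ (so that the monotonicity of difference quotients, hence existence of one-sided derivatives, applies) — but this is already granted by Theorem~\ref{Car}(a), whose footnote attributes it to the H\"older inequality argument of \cite[Prop. 2.6]{BD2}. One should also note $P_*$ is finite on $(0,\infty)$ (indeed $P_*(t)\ge P(t)>-\infty$ and $P_*(t)\le P_*(0)-t\tau_{\min}=h_*-t\tau_{\min}<\infty$ by the same two-sided bound with $s=0$), so the difference quotients are well-defined real numbers. Thus the proof is just: (i) the weight-comparison inequality above; (ii) pass to the limit to get the Lipschitz bounds on $P_*$; (iii) invoke convexity so that $s\mapsto g(s)$ is monotone and bounded, hence convergent, with limit in the required interval.
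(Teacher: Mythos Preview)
Your proposal is correct and takes essentially the same approach as the paper: the paper invokes convexity of $P_*$ to get existence of one-sided derivatives, then uses the weight comparison \eqref{forconv} (your displayed inequality summed over $A\in\MM_0^n$) to obtain the bounds $-\tau_{\max}\le P'_{*,-}(t)\le -\tau_{\min}$. The only cosmetic slip is the stray phrase ``any $W\in\hW^s$'' before your pointwise estimate---the relevant sum is over $A\in\MM_0^n$, as you in fact write in the next line.
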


\begin{proof}Existence of the limit follows from the convexity of $P_*(t)$
 which implies that left (and right) derivatives exist at every $t>0$.
 Next, if $0<s<t$, we have 
\begin{equation}
\label{forconv}
  \sum_{A \in \MM_0^n} |e^{-t\Sigma_n \tau}|_{C^0(A)}\le 
 |e^{n(s-t)\tau_{\min}}|
  \sum_{A \in \MM_0^n} |e^{-s\Sigma_n \tau}|_{C^0(A)}\, ,\,\,
  \forall n \ge 1\, ,
\end{equation}
which implies $P'_{*,-}(t)\le -\tau_{\min}$. A similar
computation gives $P'_{*,-}(t)\ge -\tau_{\max}$.
\end{proof}

Our first key lemma in view  of  Lemma~\ref{lem:short small t>} below
is the following adaptation of \cite[Lemma 3.10]{BD2}:

\begin{lemma}
[Using the H\"older Inequality]
\label{lem:low t>}
For all
$0<u \le t< t_\infty$  
and  $\kappa>0$
there exists $\omega_\kappa =\omega_\kappa(u,t)>0$ such that for all $W \in \hW^s$  with
 $|W| \ge {\delta_{u}}/3$,  
\begin{align}
\label{eq:low t> up}
 \cG_n^\delta(W,t)
\ge   \frac {\omega_\kappa(u,t)} \delta  \cdot &e^{n(P_*(u)-(| P'_{*,-}(u)| + \kappa)(t-u) )} \, ,\, \, \\
\nonumber &\forall \delta=\frac{\delta_0 }{2^{N}} \le \delta_{u}
\, , \,\, \forall n\ge n_{u} 
\, .
\end{align}
In addition, for each $\delta =\frac{\delta_0}{ 2^N}<\delta_0$ there exists $\omega_\kappa^* =\omega_\kappa^*(u,t,\delta)>0$ such that for all $W \in \hW^s$  with  $|W| \ge \delta/3$, 
\begin{equation}
\label{eq:low t> up'}
\cG_n^\delta(W,t)
	\ge    \omega_\kappa^*(u,t, \delta) \cdot e^{n(P_*(u)-(| P'_{*,-}(u)| + \kappa)(t-u) )} \, ,\,\, \forall n\ge 1 \, .
\end{equation}
Finally,  the time reversals of  \eqref{eq:low t> up} and \eqref{eq:low t> up'} also hold for the billiard map $T^{-1}$.
\end{lemma}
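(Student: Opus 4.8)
The plan is to prove \eqref{eq:low t> up} first, by a H\"older-interpolation argument between the exponents $u$ and $t$, and then to bootstrap from \eqref{eq:low t> up} to \eqref{eq:low t> up'} by a short preliminary-iteration step handling small $n$ and curves shorter than $\delta_t/3$, exactly as was done for Lemma~\ref{thelabel}. For the first bound, fix $0<u\le t<t_\infty$ and $W\in\hW^s$ with $|W|\ge\delta_u/3$, and fix a scale $\delta=\delta_0/2^N\le\delta_u$. The point is to write, for each $W_i\in\cG_n^\delta(W)$,
\[
|e^{-u\Sigma_n\tau}|_{C^0(W_i)}
= |e^{-t\Sigma_n\tau}|_{C^0(W_i)}^{u/t}\cdot |e^{-u\Sigma_n\tau}|_{C^0(W_i)}^{(t-u)/t}\, ,
\]
using bounded distortion of $\tau$ to treat the $C^0$-norm of a product as essentially the product of the $C^0$-norms. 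Applying H\"older's inequality with exponents $t/u$ and $t/(t-u)$ to the sum over $W_i\in\cG_n^\delta(W)$ gives
\[
\cG_n^\delta(W,u)\le C\,\cG_n^\delta(W,t)^{u/t}\Big(\sum_{W_i\in\cG_n^\delta(W)}|e^{-u\Sigma_n\tau}|_{C^0(W_i)}^{t/(t-u)}\Big)^{(t-u)/t}\, .
\]
Since $t/(t-u)\ge 1$ and $|e^{-u\Sigma_n\tau}|_{C^0(W_i)}\le e^{-u n\tau_{\min}}$, one bounds the exponent-$t/(t-u)$ sum by $e^{-un\tau_{\min}(t/(t-u)-1)}\cG_n^\delta(W,u)=e^{-un\tau_{\min}\cdot u/(t-u)}\cG_n^\delta(W,u)$; rearranging isolates $\cG_n^\delta(W,t)$ in terms of $\cG_n^\delta(W,u)$ and a factor $e^{\text{const}\cdot n(t-u)}$. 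Then I invoke the lower bound \eqref{eq:lower fixed} (valid since $u<t_\infty$, $\delta\le\delta_u$, hence via Lemma~\ref{thelabel} in the form \eqref{eq:lower fixed'}) to replace $\cG_n^\delta(W,u)$ by $c_{0,u}(\delta)e^{nP_*(u)}$, and the definition of $P'_{*,-}(u)$ to control the linear-in-$(t-u)$ exponent: for $n$ large, $P_*(u)-P_*(s)\le (|P'_{*,-}(u)|+\kappa/2)(u-s)$ near $s=u$, but here the cleaner route is to use that the interpolation naturally produces a rate governed by the one-sided derivative of $\log\cG_n^\delta$, which converges to $P'_{*,-}$; absorbing the error into $\kappa$ yields the stated exponent $P_*(u)-(|P'_{*,-}(u)|+\kappa)(t-u)$. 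The $1/\delta$ prefactor and the threshold $n\ge n_u$ come from the use of \eqref{eq:lower fixed'} and from ensuring the distortion and H\"older steps are uniform.

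For the second bound \eqref{eq:low t> up'}, I would argue as in the proof of Lemma~\ref{thelabel}: given $\delta=\delta_0/2^N<\delta_0$ and $W\in\hW^s$ with $|W|\ge\delta/3$ but possibly $\delta<\delta_u$ or $|W|<\delta_u/3$, use \eqref{eq:complex} together with the expansion \eqref{eq:hyp} to find $\bar n\le C'|\log(\delta_u/\delta)|+n_u$ such that $T^{-\bar n}W$ has a component $V$ of length $\ge\delta_u/3$, apply \eqref{eq:low t> up} to $V$ at scale $\delta$, and pay the cost $e^{-\bar n\tau_{\max}}$ for the first $\bar n$ steps; for $n<\bar n$ use the trivial bound $\cG_n^\delta(W,t)\ge e^{-n\tau_{\max}}$ and absorb everything into a constant $\omega_\kappa^*(u,t,\delta)$. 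The time-reversed statements follow verbatim from the reversibility of the billiard and the identification $\tau\circ T^{-1}=\tau\circ\iota$, as already used repeatedly.

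The main obstacle I anticipate is bookkeeping the passage from the H\"older estimate, which naturally produces a rate of the form $\tfrac1n\log\cG_n^\delta(W,u)$ at the finite time $n$, to the clean statement involving $P_*(u)$ and $P'_{*,-}(u)$ uniformly in $n\ge n_u$. One must be careful that the only input giving a genuinely uniform-in-$n$ lower bound $\cG_n^\delta(W,u)\ge c_{0,u}(\delta)e^{nP_*(u)}$ is \eqref{eq:lower fixed'}, which requires $u<t_\infty$ (this is exactly why the lemma is stated for $t<t_\infty$, and why the endpoint $t=t_\infty$ is handled only later, through the pivot construction); and that the factor $e^{n(|P'_{*,-}(u)|+\kappa)(t-u)}$ really does dominate the discrepancy between $P_*$ and its chord, which is where convexity of $P_*$ (Theorem~\ref{Car}(a)) and the definition of the left derivative enter. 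A secondary technical point is making the bounded-distortion constant for $e^{-t\Sigma_n\tau}$ uniform in $n$ and in the scale $\delta$; this is where the hypothesis that $\tau$ is piecewise H\"older (so that $e^\tau$ has piecewise bounded distortion along stable curves) is used, replacing the homogeneity-layer machinery of \cite{BD2}.
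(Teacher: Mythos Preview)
Your H\"older step is miscast and, as written, does not yield the exponent $P_*(u)-(|P'_{*,-}(u)|+\kappa)(t-u)$. The factorization
$|e^{-u\Sigma_n\tau}|=|e^{-t\Sigma_n\tau}|^{u/t}\cdot|e^{-u\Sigma_n\tau}|^{(t-u)/t}$
is false (the right-hand side equals $e^{-u(2t-u)m_i/t}$, not $e^{-um_i}$, where $m_i=\inf_{W_i}\Sigma_n\tau$). The only consistent two-exponent H\"older with $p=t/u$, $q=t/(t-u)$ at the value $u$ is the interpolation between $t$ and $0$, giving
$\cG_n^\delta(W,u)\le \cG_n^\delta(W,t)^{u/t}\,\cG_n^\delta(W,0)^{(t-u)/t}$.
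Rearranging and inserting the lower bound at $u$ and the upper bound at $0$ then produces an exponent $P_*(u)+\tfrac{t-u}{u}(P_*(u)-P_*(0))$; by convexity the chord slope $(P_*(0)-P_*(u))/u$ is \emph{at least} $|P'_{*,-}(u)|$, so this is a strictly weaker bound than \eqref{eq:low t> up} and cannot be upgraded to the stated one by absorbing into $\kappa$. Your alternative suggestion (``the interpolation naturally produces a rate governed by the one-sided derivative of $\log\cG_n^\delta$'') would require a uniform-in-$n$ differentiability statement for $t\mapsto\tfrac1n\log\cG_n^\delta(W,t)$, which you do not have.

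The missing idea is the one you mention and then abandon: one must introduce a \emph{third} parameter $s\in(0,u)$ and interpolate $u=\eta t+(1-\eta)s$, so that H\"older gives
$\cG_n^\delta(W,t)\ge \cG_n^\delta(W,u)^{1/\eta}\,\cG_n^\delta(W,s)^{1-1/\eta}$.
Now the lower bound at $u$ (from \eqref{eq:lower fixed} or \eqref{eq:lower fixed'}) and the \emph{upper} bound at $s$ (from \eqref{eq:upper}, which is where $s<t_\infty$ enters) combine to produce the exponent $P_*(u)+\tfrac{1-\eta}{\eta}(P_*(u)-P_*(s))=P_*(u)-\tfrac{P_*(s)-P_*(u)}{u-s}(t-u)$. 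Choosing $s\uparrow u$ makes the difference quotient $\le |P'_{*,-}(u)|+\kappa$, which is exactly \eqref{eq:low t> up}. Your route for \eqref{eq:low t> up'} via a preliminary-iteration argument \`a la Lemma~\ref{thelabel} is fine in principle, but it rests on \eqref{eq:low t> up} being correct; the paper instead reruns the same three-parameter H\"older with the input \eqref{eq:lower fixed'} in place of \eqref{eq:lower fixed}, which avoids the extra growth step.
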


The proof gives constants $\omega_\kappa(u,t)$ and $\omega_\kappa^*(u,t,\delta)$ which tend to zero as $t\to \infty$
(because the constant $\eta$ in the proof tends to zero as $t\to \infty$).

\begin{proof}
We start with \eqref{eq:low t> up} (for $t \ge u$).
Recall from the proof of \eqref{eq:first lower} that for $u \in (0, t_\infty)$ and $\delta < \delta_u$,
if $|W| \ge \delta_u/3$ and $n\ge n_u$, then
\begin{equation}
	\label{eq:lower}
	\cG_n^\delta(W,u) \ge  e^{-u C}  \frac{\delta_u}{4 \delta} c_{0,u} e^{n P_*(u)} \, ,\,   \forall \delta < \delta_u \,  ,
\end{equation}
since each  $V_i \in L_n^{\delta_u}(W)$
contains at least ${\delta_u}/{3\delta}$ elements of $\cG_n^\delta(W)$.

Now, for $s \in (0,u)$,   taking $\eta(s,t,u)\in (0,1]$ such that 
$\eta t + (1-\eta)s = u$, the H\"older inequality gives
$
\sum_i a_i^{u} 
\le \bigl(\sum_i a_i^t \bigr)^\eta \bigl( \sum_i a_i^{s} \bigr)^{1-\eta} 
$ for any positive numbers $a_i$.
It follows that  for  all $\delta {\le \delta_{u} }$, each $W \in \hW^s$ with $|W| \ge \delta_{u}/3$ and any
$n \ge n_{u}$, 
\begin{align}
\nonumber
 \cG_n^\delta(W,t)
&\ge \frac{( 
 \cG_n^\delta(W,u) )^{1/\eta}}
{(  \cG_n^\delta(W,s))^{(1-\eta)/\eta}}\\
\nonumber &\ge \left(   e^{-u C}    \frac{ \delta_{u}}{4 \delta} c_{0,u} e^{n P_*(u)} \right)^{1/\eta} 
\left( { \frac{  4L_0  }{C_1 \delta c_{1,s}} } e^{n P_*(s)} \right)^{1 - 1/\eta} \\
& = \frac{1}{\delta}
 \left(  e^{-u C}  \frac{\delta_{u}}{4} c_{0,u} \right)^{1/\eta} 
\left ({ \frac{4 L_0  }{ C_1 c_{1,s}}  } \right )^{1-1/\eta} 
e^{n (P_*(u) - P_*(s) )\frac{1-\eta}{\eta} } e^{n P_*(u)} \,   ,
\end{align}
where we used  \eqref{eq:lower} with $u$
for the lower bound in the numerator,
and  \eqref{eq:upper} for $s$ for the upper bound in the denominator, recalling that $\{s,u\} \subset (0, t_\infty)$
and $\delta_{u} \le \delta_1 < \delta_0$. 

Since 
$\eta (s,t,u)= ({u-s})/({t-s})$,
we have 
\begin{equation*}
(P_*(u) - P_*(s))\frac {1-\eta}{\eta}=\frac {t-u}{u-s} (P_*(u)-P_*(s)) \, .
\end{equation*}

 Fix $\kappa>0$ and   choose $s=s(\kappa,u)\in (0, 1)$ close enough to $u$ (i.e. small enough $\eta_\kappa = \eta(s(\kappa, u),t,u) >0$) 
such that (since $0<s<u$ and $ P'_{*,-}(u)<0$ for all $u>0$)
\begin{equation}\label{forkappa}(P_*(s)-P_*(u))/(u-s) \le | P'_{*,-}(u)| + \kappa\, .
\end{equation}
The bound \eqref{eq:low t> up} follows, setting,
for $s=s(\kappa,u)$ (recall that $\eta_\kappa$ depends on $t$),
$$\omega_\kappa(u, t)=\left(  e^{- u C} \frac{\delta_{u}}{4} c_{0,u} \right)^{1/{\eta_\kappa}} 
\left ( { \frac{ 4 L_0  }{C_1 c_{1,s}}  } \right )^{1-1/\eta_\kappa} \, . $$

For \eqref{eq:low t> up'}, we use that  \eqref{eq:upper} for $s$ 
and Lemma~\ref{thelabel} for $u$ imply that
	for any $\delta\in (0, \delta_{u})$, 
	for each $W \in \hW^s$ with  $|W| \ge \delta/3$,
	and all $n \ge 1$,   
	\begin{equation}
	\label{eq:interpol'}
	\cG_n^\delta(W,t)
	\ge \frac{( 
		\cG_n^\delta(W,u) )^{1/\eta}}
	{(  \cG_n^\delta(W,s))^{(1-\eta)/\eta}}
	\ge \bigl (c_{0,u}(\delta) \cdot e^{nP_*(u) }\bigr )^{1/\eta} 
	\bigl(  {  \frac{ 4 L_0  }{C_1 \delta c_{1,s}} } e^{nP_*(s)} \bigr)^{(\eta-1)/\eta}  ,
	\end{equation}
where we used  \eqref{eq:lower fixed'} for $u$.
We conclude
by taking $s=s(\kappa,u) \in (0, 1)$ close enough to $u$ 
such that \eqref{forkappa} holds, setting (again, $\eta_\kappa$ depends on $t$)
\begin{equation*}
\omega_\kappa^*(u, t, \delta)=c_{0,u}(\delta)^{1/\eta_\kappa} ( {  4 L_0 } )^{1-1/\eta_\kappa}( C_1  \delta c_{1,s})^{1/\eta_\kappa-1}\, .
\end{equation*}
\end{proof}

\section{Proof of Theorem~\ref{main}}
\label{44}
\subsection{Choosing the Pivot $t_*$.\label{pivot}}

The next lemma is (inspired by \cite[Definition 3.9]{BD2}). Recall $-\tau_{\max}\le P'_{*,-}(t)\le -\tau_{\min}$ from
Lemma~\ref{derP}.

\begin{lemma}[Pressure Gap: Constructing the  ``{Pivot}'' $t_*$]
\label{lem:t_*}
For any $t>0$ and $\theta_0\in (e^{-  \tau_{\min}},e^{-  \tau_{\min}/2})$, 
defining 
\begin{equation} 
\label{s*}
s_*(t) := \frac{t  |P'_{*,-}(t)|}{| P'_{*,-}(t)|+ (\log \theta_0)/2}
\, , \, \, t\in (0, t_\infty)\, ,
\end{equation}
there exists $t_*\in (0,t_\infty)$ such that $s_*:=s_*(t_*)> t_\infty$.
\end{lemma}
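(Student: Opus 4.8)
The plan is to establish Lemma~\ref{lem:t_*} by a continuity/monotonicity argument on the map $t\mapsto s_*(t)$ near $t=t_\infty$. First I would record the two elementary facts that drive everything: by Lemma~\ref{derP} we have $|P'_{*,-}(t)|\in[\tau_{\min},\tau_{\max}]$ for every $t>0$, and by the choice $\theta_0\in(e^{-\tau_{\min}},e^{-\tau_{\min}/2})$ we have $-\tau_{\min}<\log\theta_0<-\tau_{\min}/2<0$, so that $(\log\theta_0)/2\in(-\tau_{\min}/2,-\tau_{\min}/4)$, in particular $(\log\theta_0)/2<0$ and $|P'_{*,-}(t)|+(\log\theta_0)/2>\tau_{\min}+(\log\theta_0)/2>\tau_{\min}/2>0$. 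Hence the denominator in \eqref{s*} is bounded below by $\tau_{\min}/2$ and $s_*(t)$ is well-defined, finite and positive on $(0,t_\infty)$; moreover since the denominator is strictly smaller than the numerator $|P'_{*,-}(t)|$ (because $(\log\theta_0)/2<0$), we get $s_*(t)>t$ for every $t\in(0,t_\infty)$.

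The core of the argument is to push $t$ up to $t_\infty$. I would use that $P_*$ is convex on $(0,\infty)$ (stated in Theorem~\ref{Car}(a)), so its left derivative $P'_{*,-}$ is nondecreasing; combined with $P'_{*,-}\le-\tau_{\min}<0$ this gives that $|P'_{*,-}(t)|=-P'_{*,-}(t)$ is nonincreasing in $t$. Write $s_*(t)=t\cdot\frac{|P'_{*,-}(t)|}{|P'_{*,-}(t)|+(\log\theta_0)/2}=t\cdot\frac{1}{1+\frac{(\log\theta_0)/2}{|P'_{*,-}(t)|}}$. Since $(\log\theta_0)/2<0$, the fraction $\frac{(\log\theta_0)/2}{|P'_{*,-}(t)|}$ is negative, and as $|P'_{*,-}(t)|$ decreases in $t$ this negative quantity decreases (becomes more negative), so the factor $\frac{1}{1+(\log\theta_0)/(2|P'_{*,-}(t)|)}$ is nondecreasing in $t$; together with the factor $t$ this shows $s_*$ is (strictly) increasing on $(0,t_\infty)$. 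The key gain is the strict inequality $s_*(t)\ge \frac{t}{1+(\log\theta_0)/(2\tau_{\min})}=:c\,t$ with $c=\frac{1}{1+(\log\theta_0)/(2\tau_{\min})}>1$ uniformly in $t$, using $|P'_{*,-}(t)|\le\tau_{\max}$ is not even needed here, only $|P'_{*,-}(t)|\ge\tau_{\min}$ and $(\log\theta_0)/2<0$. Thus $\liminf_{t\uparrow t_\infty}s_*(t)\ge c\,t_\infty>t_\infty$, so one may pick $t_*<t_\infty$ close enough to $t_\infty$ that $s_*(t_*)>t_\infty$, and we already know $t_*\in(0,t_\infty)$ and (by $s_*(t_*)>t_*$) that $s_*(t_*)>t_*$; note $t_\infty>0$ by the discussion after \eqref{deftC}, guaranteeing such $t_*$ exists in $(0,t_\infty)$.

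The only delicate point is the sign and size of $(\log\theta_0)/2$: one must be careful that $\log\theta_0$ is negative (so the denominator in \eqref{s*} is a \emph{difference}, $|P'_{*,-}|-|{\log\theta_0}|/2$, not a sum), and that it is not too negative — precisely, $|P'_{*,-}(t)|+(\log\theta_0)/2\ge\tau_{\min}+(\log\theta_0)/2>\tau_{\min}/2>0$, which is exactly where the hypothesis $\theta_0>e^{-\tau_{\min}}$ is used. With that in hand the bound $s_*(t)\ge c\,t$ with $c=\tau_{\min}/(\tau_{\min}+(\log\theta_0)/2)>1$ holds for all $t\in(0,t_\infty)$, and choosing $t_*\in\bigl(\max\{0,t_\infty/c\},\,t_\infty\bigr)$ yields $s_*(t_*)\ge c\,t_* > t_\infty$, completing the proof. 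The main (minor) obstacle is simply bookkeeping the inequalities with the negative quantity $\log\theta_0$ correctly; there is no analytic difficulty beyond convexity of $P_*$ and the elementary bounds on $P'_{*,-}$.
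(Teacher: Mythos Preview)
Your overall strategy is sound and matches the paper's: obtain a uniform multiplicative gain $s_*(t)\ge c\,t$ with some $c>1$, then pick $t_*$ close enough to $t_\infty$. However, the key inequality is pointed the wrong way. Writing $a:=(\log\theta_0)/2<0$ and $x:=|P'_{*,-}(t)|$, the function $f(x)=\dfrac{x}{x+a}$ has derivative $f'(x)=\dfrac{a}{(x+a)^2}<0$, so $f$ is \emph{decreasing} in $x$. Consequently the hypothesis $x\ge\tau_{\min}$ gives only the \emph{upper} bound $s_*(t)/t=f(x)\le f(\tau_{\min})=\tau_{\min}/(\tau_{\min}+a)$, which is exactly your constant $c$. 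Your claim ``$|P'_{*,-}(t)|\le\tau_{\max}$ is not even needed here'' is therefore backwards: it is precisely the upper bound $x\le\tau_{\max}$ that yields the desired lower bound
\[
\frac{s_*(t)}{t}=f(x)\ \ge\ f(\tau_{\max})=\frac{\tau_{\max}}{\tau_{\max}+(\log\theta_0)/2}\ >\ 1\,,
\]
the last inequality holding since $(\log\theta_0)/2<0$ while $\tau_{\max}+(\log\theta_0)/2>\tau_{\max}-\tau_{\min}/2>0$.

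With this correction (replace your $c$ by $c':=\tau_{\max}/(\tau_{\max}+(\log\theta_0)/2)>1$ and invoke $|P'_{*,-}|\le\tau_{\max}$), the rest of your argument goes through verbatim: choose $t_*\in(t_\infty/c',\,t_\infty)$. This is essentially the paper's proof, which arrives at the slightly weaker but equivalent-in-spirit constant $1+\tau_{\min}/(4\tau_{\max})$ via the extra step $\tfrac{1}{1-y}>1+y$ together with $y=\frac{|\log\theta_0|}{2|P'_{*,-}(t)|}\ge\frac{\tau_{\min}}{4\tau_{\max}}$. Your monotonicity observation about $t\mapsto s_*(t)/t$ is correct but not actually needed.
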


\begin{remark}\label{choice}
The parameter $s_*(t_*)=s_*(t_*,\theta_0) > t_*$ is defined such that
\begin{equation}\label{defs*}
\theta_0^{s_*/2} e^{|P_-'(t_*)|(s_* - t_*)} = 1 \, .
\end{equation}
The reason for this will become clear in the proof of Lemma~\ref{lem:short small t>}.
In particular, we shall use the  value of  $\theta_0$  from Lemmas~\ref{lem:N step} and~\ref{lem:short growth}
for $t_*=t_*(\theta_0)$ and $s_*(t_*)=s_*(t_*,\theta_0)$ in Lemma~\ref{lem:short small t>}.
Note also that replacing $(\log \theta_0)/2$ by $a \log \theta_0$
in \eqref{s*} and taking $\theta_0\in (e^{-  \tau_{\min}},e^{-  b\tau_{\min}})$, for $a, b \in (0,1)$,
would replace $1/2$ by $a$ in \eqref{defs*}, \eqref{check1}, \eqref{check2}, 
\eqref{choice'} (and the line above it),  \eqref{nofoot},
and (thrice) in the two lines after \eqref{choicekappa}, and it would replace $4$ by $(ab)^{-1}$ in \eqref{check}, \eqref{abo}, and \eqref{soon}. Taking $a$ and $b$ close to $1$, this would give a larger value for $s_*$ (up to taking $\kappa$ smaller in \eqref{choicekappa}). Since  $e^{n b\tau_{\min}}$
is a rough bound on the $n$-step expansion of Lemma~\ref{lem:N step},
and (more importantly) our argument is by contradiction,  there is no reason to optimise here.
\end{remark}

\begin{proof}
To construct $t_*$, we first check that 
\begin{equation}
\label{check}s_*(t)>t \cdot \bigl (1+ \frac{\tau_{\min}}{4\tau_{\max}} \bigr )\, , \, \, 
\forall t\in (0 , t_\infty)
\, .
\end{equation}
Indeed, since 
\begin{equation}
\label{check1}
 \frac 1{1-\frac{|\log \theta_0|}{2| P'_{*,-}(t)|}}
 > 1+\frac{|\log \theta_0|}{2| P'_{*,-}(t)|}\, ,
\end{equation} 
the bound \eqref{check} follows from the fact that $ \tau_{\min} \le  | P'_{*,-}(t)|\le \tau_{\max}$ implies
\begin{equation}
\label{check2}
\frac{|\log   \theta_0 |}{2| P'_{*,-}(t)|}
\in \Bigl [ \frac{\tau_{\min}}{4\tau_{\max}},\frac  1 {2} \bigr )\, .
\end{equation}
Then, taking $t_*=t_\infty-\upsilon$ for  $\upsilon \in (0,t_\infty)$, 
it suffices to pick $\upsilon>0$  such that
\begin{equation}\label{abo}
\bigl (1+ \frac{\tau_{\min}}{4\tau_{\max}} \bigr )\bigl ( t_\infty-\upsilon \bigr) 
 > t_\infty \, .
\end{equation}
Since $t_\infty \ge t_C=\log \Lambda/ (\tau_{\max}-\tau_{\min})$ by \eqref{deftC}, the bound 
\eqref{abo} holds as soon as
\begin{equation}\label{soon}
\upsilon < \log \Lambda \cdot (\tau_{\max}-\tau_{\min})^{-1}\cdot  
\bigl (1+ 4\frac {\tau_{\max}}{\tau_{\min}}\bigr )^{-1}  \, .
\end{equation}
\end{proof}

\subsection{Second Key Lemma}\label{2ndl}
The second key lemma is inspired by \cite[Lemma 3.11]{BD2}
(the proof below requires a more involved decomposition of orbits):

\begin{lemma}
\label{lem:short small t>} Fix $\theta_0\in (e^{-\tau_{\min}}, e^{-\tau_{\min}/2})$.
Let $t_*<t_\infty$ and $s_*(t_*)>t_\infty$ be as in Lemma~\ref{lem:t_*}. If $P_*(t_*)\ge 0$
then  the SSP conditions \eqref{keyb}, \eqref{keyb3},  and \eqref{keyb2} hold  at all $t\in [ t_*, s_*)$.
\end{lemma}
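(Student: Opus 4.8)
The plan is to establish the three SSP conditions at a parameter $t\in[t_*,s_*)$ by running the Growth Lemma machinery of \S\ref{GL} at the scale that comes with $t_*$, but with the weighted sums controlled \emph{from below} at the exponent $P_*(t_*)-(|P'_{*,-}(t_*)|+\kappa)(t-t_*)$ furnished by the First Key Lemma (Lemma~\ref{lem:low t>}) applied at $u=t_*$, and \emph{from above} by the $n$-step contraction $\theta_0^{nt}$ coming from Lemmas~\ref{lem:N step} and~\ref{lem:short growth} for the short (singular) pieces. The whole point of the choice of pivot (Remark~\ref{choice}, equation~\eqref{defs*}) is that, for $t<s_*(t_*)$, one has $\theta_0^{t/2}e^{|P'_{*,-}(t_*)|(t-t_*)}<1$; equivalently the growth rate $\theta_0^{t}$ of the short pieces beats $e^{P_*(t_*)-(|P'_{*,-}(t_*)|+\kappa)(t-t_*)}$ raised to the relevant power, once $\kappa$ is chosen small enough (this uses $P_*(t_*)\ge 0$, so that dropping $P_*(t_*)$ from the lower bound only helps). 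So the first step is to fix $\theta_0\in(e^{-\tau_{\min}},e^{-\tau_{\min}/2})$, take $t_*$ and $s_*$ from Lemma~\ref{lem:t_*}, fix $t\in[t_*,s_*)$, and choose $\kappa=\kappa(t)>0$ small enough that
\[
\theta_0^{t/2}\, e^{(|P'_{*,-}(t_*)|+\kappa)(t-t_*)} < 1\, ;
\]
continuity of $s_*(\cdot)$ in its defining relation makes this possible precisely for $t<s_*$.

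Next I would set up the decomposition of orbits. Work at a scale $\delta=\delta_{t_*}/2^{k}$ (of the form $\delta_0/2^N$) small enough to apply both Lemma~\ref{lem:N step} (so $\delta\le\bar\delta_0(n_0(t_*,\theta_0),\delta_0)$) and Lemma~\ref{lem:short growth} (so the complexity bound $Kj+1$ holds up to time $2m_1$); here $n_0$, $m_1$ depend only on $t_*$ and $\theta_0$, not on $t\ge t_*$, which is what lets a single scale work on all of $[t_*,s_*)$. Given $W\in\hW^s$ with $|W|\ge\delta/3$, decompose $S_n^\delta(W)$ by the first time a piece became short: each $W_i\in S_n^\delta(W)$ has a unique ancestor $V\in\cI_k^\delta(W)$ at some time $1\le k\le n$, and from time $k$ on $W_i$ is a descendant of the short curve $V$. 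This is the "more involved decomposition" the paper warns about: one sums $|e^{-t\Sigma_k\tau}|_{C^0(V)}$ over $V\in\cI_k^\delta(W)$ — bounded by $\theta_0^{kt}$ (or $Km_1\theta_0^{kt}$ for small $k$) via \eqref{eq:short growth up}--\eqref{eq:short growth up small n} — times the subsequent growth $\cG_{n-k}^\delta(V,t)\le \frac{2L_0}{C_1\delta}Q_{n-k}(t)\le \frac{4L_0}{C_1\delta c_{1,t}}e^{(n-k)P_*(t)}$ from \eqref{eq:upper}. Meanwhile the denominator $\cG_n^\delta(W,t)$ is bounded below by Lemma~\ref{lem:low t>} (estimate \eqref{eq:low t> up}, valid since $|W|\ge\delta/3\ge\delta_{t_*}/3$ after possibly passing to a good descendant as in the proof of Lemma~\ref{thelabel} — actually one needs $|W|\ge\delta_{t_*}/3$ for \eqref{eq:low t> up}, or \eqref{eq:low t> up'} at fixed $\delta$; I would use the $\delta$-dependent version) by $\tfrac{\omega_\kappa}{\delta}e^{n(P_*(t_*)-(|P'_{*,-}(t_*)|+\kappa)(t-t_*))}$. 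Taking the ratio, the $e^{nP_*(t)}$ factor and $e^{-n(P_*(t_*)-\dots)}$ factor combine; using $P_*(t)\le P_*(t_*)$ (pressure decreasing) and $P_*(t_*)\ge0$ one is left, after summing the geometric series in $k$, with a bound of the form $C(t)\sum_{k\ge n_t}(\theta_0^{t/2}e^{(|P'_{*,-}(t_*)|+\kappa)(t-t_*)})^{k}\cdot(\text{something})$ which is $\le 1/4$ once $n_t$ is large, giving \eqref{keyb}. The same geometric summation, with the $e^{-n t\tau_{\min}}/L_n^\delta(W,t)$ replaced using $L_n^\delta\ge\tfrac14\cG_n^\delta$ (from \eqref{keyb}) and the lower bound on $\cG_n^\delta$, gives a convergent series, hence \eqref{keyb3}; and \eqref{keyb2}, which only asks for a threshold $n^*_t(|W|,\delta)$ depending on $|W|$, follows for general $|W|$ by the same argument after the $\mathcal O(\log(\delta_{t_*}/|W|))$ waiting time needed for $T^{-n}W$ to grow a long component (exactly as in the proof of Lemma~\ref{thelabel}), with the constant $1/4$ relaxed to $1/2$ to absorb the finitely many initial steps. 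The time-reversed versions follow verbatim by the reversibility of the billiard, since the First Key Lemma, the Growth Lemma, and \eqref{eq:upper} all have stated time-reversals.

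The main obstacle I expect is bookkeeping in the orbit decomposition: making precise that every short piece at time $n$ is accounted for exactly once by its \emph{earliest} short ancestor, that the subsequent sum over descendants is genuinely $\cG_{n-k}^\delta(V,t)$ (so that \eqref{eq:upper} applies — here one must be careful that artificial subdivisions at the scale $\delta$ do not spoil the identification, which is where the convention \eqref{convent} and property \eqref{careful} enter), and that the exponents line up so that the decisive inequality is exactly $\theta_0^{t/2}e^{(|P'_{*,-}(t_*)|+\kappa)(t-t_*)}<1$ and nothing worse. A secondary subtlety is uniformity of all constants ($n_0$, $m_1$, hence $n_t$, and the scale $\delta$) over $t\in[t_*,s_*)$: these must be chosen once, from $t_*$ and $\theta_0$ alone, using monotonicity of the relevant bounds in $t$ (e.g. $\theta_0^{nt}\le\theta_0^{nt_*}$ since $\theta_0<1$), so that the single SSP scale $\delta_t:=\delta$ and threshold $n_t$ work throughout the interval. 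Once the decomposition and the exponent arithmetic are set up cleanly, the three conditions drop out of a single geometric-series estimate, and the lemma — hence, by the bootstrap in \S\ref{proofmain}, Theorem~\ref{main} — follows.
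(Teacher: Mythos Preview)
Your overall strategy --- short growth bounded by $\theta_0^{nt}$, denominator bounded below by Lemma~\ref{lem:low t>}, pivot inequality $\theta_0^{t/2}e^{(|P'_{*,-}(t_*)|+\kappa)(t-t_*)}<1$ --- is right, but the orbit decomposition you describe is backwards and leads to a circularity that cannot be repaired. You propose to write $S_n^\delta(W,t)$ as a sum over $V\in\cI_k^\delta(W)$ (short for the first $k$ steps) times the subsequent full growth $\cG_{n-k}^\delta(V,t)$, and then bound the latter by $\tfrac{4L_0}{C_1\delta c_{1,t}}e^{(n-k)P_*(t)}$ via \eqref{eq:upper}. But the constant $c_{1,t}$ in \eqref{eq:upper fixed} is only available once \eqref{keyb} and \eqref{keyb3} already hold at $t$ --- exactly what you are proving. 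Falling back on $c_{1,t_*}$ via $\cG_{n-k}^\delta(V,t)\le\cG_{n-k}^\delta(V,t_*)\le C e^{(n-k)P_*(t_*)}$ does not rescue the arithmetic either: after dividing by the lower bound from Lemma~\ref{lem:low t>}, one is left with a prefactor $e^{n(|P'_{*,-}(t_*)|+\kappa)(t-t_*)}$ that blows up in $n$ for $t>t_*$. Separately, a curve $W_i\in S_n^\delta(W)$ may well have long ancestors at intermediate times, so its time-$k$ ancestor need not lie in $\cI_k^\delta(W)$; the decomposition as stated does not even cover $S_n^\delta(W)$.

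The paper's decomposition runs the other way, and at two scales $\delta_3\le\delta_{t_*}$. One groups each $W_i\in S_n^{\delta_3}(W)$ by its \emph{most recent long ancestor} $W_a\in L_j^{\delta_{t_*}}(W)$ at the coarser scale (this is where \eqref{careful} enters), so that the tail from $W_a$ to $W_i$ is short and bounded by $\theta_0^{(n-j)t}$ via Lemma~\ref{lem:short growth}. The numerator is then bounded by $C\sum_j \theta_0^{(n-j)t}\,L_j^{\delta_{t_*}}(W,t)$. The idea you are missing is that the denominator is bounded below \emph{by the same factor}: since each $|W_a|\ge\delta_{t_*}/3$, applying \eqref{eq:low t> up} starting from $W_a$ gives $\cG_n^{\delta_3}(W,t)\ge(\mbox{const})\,e^{(n-j)(P_*(t_*)-(|P'_{*,-}(t_*)|+\kappa)(t-t_*))}\,L_j^{\delta_{t_*}}(W,t)$. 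The uncontrolled quantity $L_j^{\delta_{t_*}}(W,t)$ cancels in the ratio, leaving precisely the geometric series in $\bar\ve=\theta_0^{t/2}e^{(|P'_{*,-}(t_*)|+\kappa)(t-t_*)}<1$ (and $P_*(t_*)\ge0$ is used here, to drop $e^{-(n-j)P_*(t_*)}$). Without this cancellation the estimate does not close.
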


The proof below uses \eqref{careful} and thus the convention \eqref{convent}.

\begin{proof}[Proof of Lemma~\ref{lem:short small t>}]
We first consider condition \eqref{keyb} of SSP.1.

By definition of $s_*$ (recall that $\inf |P'_{*,-}(s)|>-\log \theta_0/2$)
\begin{equation}\label{choice'}
\theta_0^{t'/2} e^{| P'_{*,-}(t_*)| (t'-t_*)} < 1\, ,\quad
	\forall t_*\le t' <  s_*\, .
\end{equation}
Thus for all $t' \in [t_*, s_*)$ 
there exists $ \kappa_1 = \kappa( t_*,  t'  ) >0$  such that
\begin{equation}
	\label{nofoot} 
\bar \ve := \sup_{ t_*\le t\le  t'}\bigl (	 \theta_0^{t/2}  e^{(| P'_{*,-}(t_*)| + \kappa_1)(t-t_*)}\bigr )  < 1\, . 
\end{equation}
For 
$m_1\ge  \max\{n_0(t_*,\theta_0),n_{t_*}\}$  to be chosen later depending on  
$\bar \ve$, $\delta_{t_*}$,   and $\kappa_1$,  pick $\delta_3(m_1) { \in (0, \delta_{t_*}]}$ (similarly to the choice of $\bar \delta_0$ in the proof of Lemma ~\ref{lem:N step})
so small that any stable curve of length at most $\delta_3$ can be cut into at most
    $Kj+1$ connected components by $\cS_{-j}$ for $0 \le j \le 2m_1$.

For $n \ge m_1$, write $n = \ell m_1 + r$, for some $0 \le r < m_1$ and $\ell\ge 1$.  Let $W \in \hW^s$ with $|W| \ge \delta_3/3$.  
We group the curves
$W_i \in S^{\delta_3}_n (W)$ with $|W_i| < \delta_3/3$, as in the proof of   \cite[Lemma~3.11]{BD2},
according to the largest  $k\in \{0, \ldots, \ell-1\}$ such that $T^{(\ell-k)m_1+ r}W_i \subset V_j \in L^{\delta_3}_{km_1}(W)$
(such a $k$ must exist since $|W|\ge \delta_3/3$ while $|W_i|<\delta_3/3$).  
Denote\footnote{Note that $\bar \cI^{\delta}_{(\ell-k)m_1 + r}(V_j)$
was abusively denoted $\cI^{\delta}_{(\ell-k)m_1 + r}(V_j)$ in the proof of \cite[Lemma~5.2]{BD1}, see footnote 23  there.} 
by $\bar \cI^{\delta_3}_{(\ell-k)m_1 + r}(V_j)$ the set of $W_i \in \cG^{\delta_3}_n(W)$ thus associated with 
$V_j \in L_{km_1}^{\delta_3}(W)$ (such elements are known to be small  only at iterates
 $jm_1+r$). For such $W_i$, 
 $T^{(\ell-k')m_1+r}(W_i)$ is contained in an element of
$\cG_{m_1 k'}^{\delta_3}  (W)$ shorter than $\delta_3/3$ for $k'<k$.  
   So for $k >0$,
we may apply the inductive bound \eqref{eq:short growth up} since elements of 
$\bar \cI^{\delta_3}_{(\ell-k)m_1+r}(V_j)$ can only be created by intersections with $\cS_{-m_1}$ at the first $\ell - k-1$ iterates and with $\cS_{-m_1-r}$ at the last step.  
 For $k=0$, $W$ itself may be longer than $\delta_3$.  Thus we first subdivide $W$ into at most
$\delta_0/\delta_3$ curves of length at most $\delta_3$ and then apply \eqref{eq:short growth up} to
each piece.
This yields, for $t_*\le t\le  t'$,
\begin{align}
\nonumber
S_n^{\delta_3}(W,t)
& \le \sum_{k=0}^{\ell-1} \sum_{V_j \in L^{\delta_3}_{km_1}(W)} |e^{-t\Sigma_{km_1} \tau}|_{C^0(V_j)}
\sum_{W_i \in \bar\cI^{\delta_3}_{(\ell-k)m_1 + r}(V_j)} |e^{-t\Sigma_{(\ell-k)m_1+r} \tau}|_{C^0(W_i)} \\
\label{eq:short first} 
& \le  \frac{\delta_0}{\delta_3} \theta_0^{tn} + 
\sum_{ k=1}^{\ell-1} \sum_{V_j \in L^{\delta_3}_{km_1}(W)} |e^{-t\Sigma_{k m_1} \tau}|_{C^0(V_j)}  
 \theta_0^{t { ((\ell-k)m_1 + r) } } \, .
\end{align}
Next, recalling \eqref{careful}, for any $k \ge 1$,  each $V_j \in L^{\delta_3}_{k m_1}(W)$ is contained in an element
$U_i \in \cG_{k m_1}^{\delta_{t_*}}(W)$.  Since $|V_j| \ge \delta_3/3$, there are at most $3 \delta_{t_*}/\delta_3$ 
different  $V_j$ corresponding to each fixed $U_i$.  Then we group each $U_i \in \cG_{km_1}^{\delta_{t_*}}(W)$ according to
its most recent long ancestor $W_a \in L_j^{\delta_{t_*}}(W)$ for some $j \in [0, k m_1]$.
  Note that
$j=0$ is possible if $|W| \ge \delta_{t_*}/3$. If $|W|  < \delta_{t_*}/3$, and no such time $j$ exists for $U_i$, then by
convention we 
also associate the index $j=0$ to such $U_i$.  In either case, $U_i \in \cI_{km_1}^{\delta_{t_*}}(W)$,
and we may apply \eqref{eq:short growth up} after possibly subdividing $W$ into at most
$\delta_0/\delta_{t_*}$ curves of length at most $\delta_{t_*}$.
Then,  for $j \ge 1$, we apply    
\eqref{eq:short growth up small n} from 
Lemma~\ref{lem:short growth} 
to each  $\cI_{km_1 - j}^{\delta_{t_*}}(\cdot)$
(since $\delta_3 \le \delta_{t_*}$, the constant $m_1(\delta_{t_*}) \le m_1(\delta_3)$, so the bound holds
with our chosen $m_1$, although it may not be optimal),  
\begin{align*}
 L^{\delta_3}_{km_1}(W, t)
& \le \frac{3 \delta_{t_*}}{\delta_3} 
\biggl( \sum_{U_i \in \cI_{km_1}^{\delta_{t_*}}(W) }  |e^{-t\Sigma_{k m_1} \tau}|_{C^0(U_i)}     \\
& \qquad  + 
\sum_{j=1}^{km_1} \sum_{W_a \in L_j^{\delta_{t_*}}(W)}   |e^{-t\Sigma_{j} \tau}|_{C^0(W_a)} \sum_{U_i \in \cI_{km_1 -j}^{\delta_{t_*}}(W_a)}
 |e^{-t\Sigma_{k m_1-j} \tau}|_{C^0(U_i)}   \biggr ) \\
& \le \frac{3 \delta_{t_*}}{\delta_3} \biggl (  \frac{\delta_0}{\delta_{t_*}}  \theta_0^{t km_1 } + \sum_{j=1}^{km_1} \sum_{W_a \in L_j^{\delta_{t_*}}(W)}   |e^{-t\Sigma_{j} \tau}|_{C^0(W_a) } 
 { Km_1 } \theta_0^{t (km_1 - j)} \biggr) \, .
\end{align*}
Combining this estimate with \eqref{eq:short first} yields  (summing over $k$ for the $j=0$ terms and adding the term corresponding to $k=0$),
\begin{equation}
\label{<}
S_n^{\delta_3}(W,t)
\le  \frac{3 \delta_0}{\delta_3} \frac{n}{m_1} \theta_0^{t n} +  
 \frac{3 { \delta_{t_*} } }{\delta_3}   
\sum_{k=1}^{\ell-1}  \sum_{j=1}^{km_1}
 { Km_1 }  \theta_0^{t (n -j)} L_j^{\delta_{t_*}}(W,t)  \, .
\end{equation}

For fixed $k\in \{1, \ldots, \ell-1\}$, and for each $1\le j\le km_1$ such that $L_j^{\delta_{t_*}}(W)\ne \emptyset$, 
the lower bound \eqref{eq:low t> up} in Lemma~\ref{lem:low t>} (for $u=t_*$)
 and the distortion constant $e^{-tC}\ge e^{-t'C}$ imply (note that 
 $n-j\ge \ell m_1+r -km_1\ge r +m_1\ge n_{t_*}$),
\begin{align}
 \nonumber  \cG_n^{\delta_3}(W,t)
& \ge 
\sum_{W_a \in L_j^{\delta_{t_*}}(W)} e^{-tC} 
 |e^{-t\Sigma_{j} \tau}|_{C^0(W_a)} 
 \sum_{W_i \in \cG_{n-j}^{\delta_3}(W_a)}
  |e^{-t\Sigma_{n-j} \tau}|_{C^0(W_i)} 
 \\
\label{>} & \ge \frac{\omega_{\kappa_1}(t_*,t)}{\delta_3 e^{t'C} }   e^{(n-j)(P_*(t_*)-(| P'_{*,-}(t_*)| + \kappa_1)(t-t_*) )} 
\sum_{W_a \in L_j^{\delta_{t_*}}(W)} | e^{-t\Sigma_{j} \tau}|_{C^0(W_a)}  \, .
\end{align}
Combining \eqref{<} with either \eqref{>} (for $j\ge 1$) or \eqref{eq:low t> up'} from Lemma~\ref{lem:low t>}  (for  $j=0$ and $u=t_*$) and setting $\Delta= 3 e^{t'C} \delta_{t_*} Km_1 $,
yields (using that $P_*(t_*)\ge 0$),
\begin{align}
\nonumber  \frac{ S_n^{\delta_3}(W,t)}{ \cG_n^{\delta_3}(W,t) }
& \le n
\frac{ \frac{3  \delta_0 }  {\delta_3 m_1} 
\theta_0^{t n}}{\omega_{\kappa_1}^*(t_*, t, \delta_3)
e^{n(P_*(t_*)-(| P'_{*,-}(t_*)| + \kappa_1)(t-t_*) )}
} \\
\nonumber & \qquad
 + \sum_{k=1}^{\ell-1} \sum_{j=1}^{km_1} 
 \frac{ \frac{3  \delta_{t_*}}{\delta_3}  { Km_1}  \theta_0^{t (n - j)} 
  L_j^{\delta_{t_*}}(W,t)  }
 {\frac{\omega_{\kappa_1}(t_*,t)}{\delta_3 e^{t'C}} e^{(n-j)(P_*(t_*)-(| P'_{*,-}(t_*)| + \kappa_1)(t-t_*) )}
  L_j^{\delta_{t_*}}(W,t)  } \\
\nonumber  & \le
 \frac{3  \delta_0 }  {\delta_3 \cdot \omega_{\kappa_1}^* (t_*,t,\delta_3)\cdot m_1} n (e^{-P_*(t_*)} \bar \ve)^n
 + \frac{ \Delta  }{\omega_{\kappa_1}(t_*,t)}\sum_{k=1}^{\ell-1} \sum_{j=1}^{km_1}  (e^{-P_*(t_*)}\bar \ve)^{n-j} \\
 \nonumber & \le \frac{ 3  \delta_0  }{\delta_3 \cdot \omega_{\kappa_1}^*(t_*,t,\delta_3) \cdot m_1} n \bar \ve^n
 +  \frac{ \Delta }{\omega_{\kappa_1}(t_*,t)}\frac 1 {1-\bar \ve} \sum_{k=1}^{\ell-1} \bar \ve^{n-km_1}
 \\& \label{endSSP1}
 \le \frac{ 3  \delta_0   }{\delta_3 \cdot \omega_{\kappa_1}^*(t_*, t,\delta_3) \cdot m_1}n \bar \ve^n
 + \frac{ 3 e^{t'C} { \delta_{t_*} Km_1 } }{\omega_{\kappa_1}(t_*,t)\cdot } \frac{\bar \ve^{m_1}}{(1-\bar \ve)(1-\bar \ve^{m_1})}\, .
   \end{align}
To establish \eqref{keyb}, choose first $m_1\ge n_{t_*}$  such that
the second term is less than $1/8$ 
setting $\delta_{t}:=\delta_3(m_1)$, and then $n_{t} \ge m_1$   such that the first term is less than
$1/8$, for  $n \ge n_{t}$. 

 We next show \eqref{keyb3}.  
 For $n\ge n_t$, we deduce from \eqref{keyb} and \eqref{eq:low t> up'} (for small $\kappa>0$) that,
  for all $W\in \hW^s$  with $|W|\ge \delta_t /3$,
 \begin{align}\label{choicekappa}
 L_n^{\delta_t}(W,t) &\ge  \frac {3}{4} \cG_n^{\delta_t}(W,t)\ge \frac {3}{4}\omega_\kappa^*(t_*,t,\delta_t)  e^{nP_*(t_*)} e^{-n (t-t_*)( |P'_{*,-}(t_*)|+\kappa)}\, .
 \end{align}
 Since  
$  e^{-| P'_{*,-}(t_*)| (t-t_*)} > \theta_0^{t/2}\ge e^{-t\tau_{\min}/2}$ by \eqref{choice'},
 while  $P_*(t_*)\ge 0$, it suffices to take $\kappa$ such that
 $
 (t-t_*)\kappa +\frac{t}{2}\tau_{\min} < t \tau_{\min}
 $ to complete the proof of \eqref{keyb3}.
 
\medskip

It remains to consider   SSP.2. 
We may assume $|W|<\delta_{t_*}/3$ since otherwise  \eqref{keyb} from SSP.1 implies
\eqref{keyb2} with $n_{t}^*=n_t$. 
As observed in the proof of \cite[Cor. 5.3]{BD1},
there exists $\bar C_2$ (depending only on the billiard
table) 
such that the first iterate $\ell_0$ at which $\cG^{\delta_{t_*}}_{\ell_0}(W)$ contains
at least one element of length more than $\delta_{t_*}/3$ satisfies 
$$\ell_0 \le n_2=n_2(\delta_{t_*}) := \bar C_2 |\log(|W|/\delta_{t_*})|\, .$$

Since $|W|<\delta_{t_*}/3$, it suffices to consider
the term corresponding to $j=0$  (and $k=0$)  in \eqref{endSSP1} (the other one is bounded by
$1/8$, for $n\ge m_1$ for $m_1$ chosen as above).
For this purpose, for any $n=\ell m_1+r \ge m_1$,  the first term of  \eqref{<} is replaced
by
\begin{equation}
\label{<2}
\frac{\delta_{t_*}}{3 \delta_3} \theta_0^{tn} + \sum_{k=1}^{\ell-1} \frac{3\delta_{t_*}}{\delta_3} \theta_0^{t n}
\le  \frac{3 \delta_{t_*} n}{\delta_3 m_1}   \theta_0^{t n }  \, ,
\end{equation}
where we have applied \eqref{eq:short growth up} from Lemma~\ref{lem:short growth}. 
For any $n\ge \max\{n_2, m_1\}$, the bound \eqref{eq:low t> up'} from Lemma~\ref{lem:low t>}
(for $u=t_*$)  is replaced by
\begin{equation}
\label{eq:low t> up'2}
\cG_n^{\delta_3}(W,t)
	\ge    
\omega_{\kappa_1}^*(t_*,t, \delta_3) \cdot e^{-tn_2\tau_{\max}} e^{(n-n_2)(P_*(t_*)-(| P'_{*,-}(t_*)| + \kappa_1)(t-t_*) )}  \, .
\end{equation}
Dividing \eqref{<2} by \eqref{eq:low t> up'2}, the term corresponding to $j=0$ in \eqref{endSSP1} is bounded by
\begin{align*}
&\frac{3 { \delta_{t_*} }  \frac{n}{m_1} \theta_0^{t n } }
{\delta_3 \cdot \omega_{\kappa_1}^*(t_*,t, \delta_3) \cdot e^{-tn_2\tau_{\max}} e^{(n-n_2)(P_*(t_*)-(| P'_{*,-}(t_*)| + \kappa_1)(t-t_*) )}}\\
&\qquad\qquad\le \frac{3 { \delta_{t_*} }     e^{tn_2\tau_{\max}}}
{m_1 \cdot \omega_{\kappa_1}^*(t_*,t, \delta_3)\cdot  \delta_3 }
n \bar \ve^{n-n_2}\, .
\end{align*}
We conclude, since, if $n^*_t/n_2$ is large enough (depending on $t$, $\bar \ve$, $\delta_3=\delta_t$) then 
$$
n(\bar \ve^{n/n_2} e^{t\tau_{\max}})^{n_2}< 
\frac 1 8 \cdot\frac{\bar \ve^{n_2} \cdot { m_1 \cdot } \delta_3\cdot \omega_{\kappa_1}^*(t_*,t, \delta_3) }
{3 { \delta_{t_*} } }\,, \, \, \forall n\ge n^*_t\, .
$$
\end{proof}

\subsection{Proof of Theorem~\ref{main}}\label{proofmain}

If $P(t_\infty)< 0$ then $t_\infty >\htop(\Phi^1)$, using
Proposition~\ref{Car2}, and we are done
by Proposition~\ref{Car3} and the definition of $t_\infty$,
since we assumed \eqref{sparse}  at $\htop(\Phi^1)$.
Assume for a contradiction
that $P(t_\infty)\ge 0$. Let $t_*<t_\infty$ and $s_*(t_*)>t_\infty$  be as in Lemma~\ref{lem:t_*},
and  fix $t_\infty<t_2<s_*$. 
Since $P_*(t_*)> P_*(t_\infty)\ge P(t_\infty)$ (by Theorem~\ref{Car}(a) applied to
$s=t_\infty$ and $t=t_*$), our assumption that $P(t_\infty)\ge 0$ 
implies that $P_*(t_*)>  0$. Then
Lemma~\ref{lem:short small t>} applied to  $t_*$ and $s_*(t_*)$
gives  that the SSP conditions \eqref{keyb}, \eqref{keyb3}, and \eqref{keyb2} hold for all $t \in [0, t_2]$.
Since $t_2>t_\infty$, this is a contradiction, which concludes the proof
of Theorem~\ref{main}.


\end{document}